\numberwithin{equation}{section}
\theoremstyle{plain}
\newtheorem{Th}{Theorem}[section]
\newtheorem{Lemma}{Lemma}[section]
\newtheorem{Cor}{Corollary}[section]
\newtheorem{Prop}{Proposition}[section]
\theoremstyle{definition}
\newtheorem{Ques}{Question}
\newtheorem{Rem}{Remark}[section]
\newtheorem{?}{Problem}[section]
\newcommand{\Q}{\mathbb{Q}}
\newcommand{\bQ}{\overline{\mathbb{Q}}}
\newcommand{\Z}{\mathbb{Z}}
\newcommand{\N}{\mathbb{N}}
\newcommand{\Gal}[1]{\textrm{Gal}({#1})}
\newcommand{\Tr}{\textrm{Tr}}
\begin{document}
	\title{Sufficient conditions for a problem of Polya}
	\author{Abhishek Bharadwaj,  Veekesh Kumar, Aprameyo Pal,  and R. Thangadurai} 
	\address[Abhishek Bharadwaj]{Department of Mathematics, Jeffery Hall, 99 University Avenue, Queen's University, Kingston, Ontario, K7L 3N6, Canada}
	\address[Aprameyo Pal and R. Thangadurai]{Harish-Chandra Research Institute, HBNI, Chhatnag Road, Jhunsi, Prayagraj 211019, India.}
	\address[Veekesh Kumar]{Department of Mathematics, Indian Institute of Technology Dharwad, Chikkamalligawad village, Dharwad, Karnataka - 580007 }
 \email[Abhishek Bharadwaj]{atb4@queensu.ca}
	 \email[Aprameyo Pal]{aprameyopal@hri.res.in}
 \email[Veekesh Kumar]{veekeshk@iitdh.ac.in}
 \email[R. Thangadurai]{thanga@hri.res.in}
	\subjclass[2010] {Primary 11J87; Secondary 11S99 }
	\keywords{Trace,  Schmidt Subspace Theorem, Skolem-Mahler-Lech Theorem.}
	
	\begin{abstract}
		Let $\alpha$ be a non-zero algebraic number. Let $K$ be the Galois closure of $\mathbb{Q}(\alpha)$ with Galois group $G$ and $\bar{\mathbb{Q}}$ be the algebraic closure of $\mathbb{Q}$. In this article, among the other results, we prove the following. {\it If $f\in \bar{\mathbb{Q}}[G]$ is a non-zero element of the group ring $\bar{\mathbb{Q}}[G]$ and $\alpha$ is a given algebraic number such that $f(\alpha^n)$ is a non-zero algebraic integer for infinitely many natural numbers $n$, then $\alpha$ is an algebraic integer.} This result generalizes the result of Polya \cite{polya}, Corvaja and Zannier \cite{corv} and Philippon and Rath \cite{rath}. We also prove the analogue of this result for rational functions with algebraic coefficients. Inspired by a result of B. de Smit \cite{smit}, we prove a finite version of the Polya type result for a binary recurrence sequences of non-zero algebraic numbers. In order to prove these results, we apply the techniques of  Corvaja and Zannier along with the results of Kulkarni {\it et al.}, \cite{kul} which are applications of the Schmidt subspace theorem. 
	\end{abstract}
	\maketitle 
	
	\section{Introduction}
	We deal with the problem of determining whether a given algebraic number $\alpha$ is an algebraic integer under certain conditions. 
	In 1915, Polya had proved the following statement: \textit{If $\Tr_{\Q(\alpha)/\Q} (\alpha^n)$ are integers for all natural numbers $n$, then $\alpha$ is an algebraic integer}. A proof of the above result, by elementary manipulations, does not seem plausible owing to Newton's identities. In the proof provided by Polya \cite{polya}, he uses Fatou's lemma by considering the generating function of the trace operator (which is a rational function). Alternative proofs were given by H. Lenstra and P. Ponomarev \cite{smit} independently using complementary modules.
	
	\smallskip
	The theorem of Polya doesn't hold when we consider an infinite subset of natural numbers. For example,  for the algebraic number $\alpha = 1/\sqrt{2}$,  we have  $\Tr_{\mathbb{Q}(\sqrt{2})/{\Q}} (1/\sqrt{2})^n$ is always an integer whenever $n\equiv 1 \mod 2$. Hence we need to assume `non-zero integer' condition when we consider the Polya's question for an infinite subset of the natural numbers.    In \cite{corv}, P. Corvaja and U. Zannier, using the subspace theorem, proved the following. {\it Suppose $\alpha$ is an algebraic number and let $E$ be an infinite subset of $\mathbb{N}$. For each $n\in E$, suppose there exists a positive integer $q_n$ such that $\lim_{n\in E} (\log q_n)/n = 0$ and ${\mathrm{Tr}}_{\mathbb{Q}(\alpha)/\mathbb{Q}}(q_n\alpha^n) \in \mathbb{Z}$. Then $\alpha$ is either the $h$-th root of a rational number for some positive integer $h$ or an algebraic integer.}  P. Philippon and P. Rath \cite{rath}  proved a similar result by replacing the integer $q_n$ with a constant which is a non-zero algebraic number.
	\smallskip
	
	A finite version of Polya's question  was refined further by B. de Smit \cite{smit} who explicitly computed the constant $C$ in terms of $\alpha$: {\it If $\Tr_{\Q(\alpha)/\Q} (\alpha^m) \in \mathbb{Z}$ for $1 \le m \le C$, then $\alpha$ is an algebraic integer}. A finite constant is expected because we need to evaluate only finitely many elementary symmetric functions at the Galois conjugates of $\alpha$ to determine whether the given value $\alpha$ is an algebraic integer. The constant $C$ in B. de Smit's result is optimal. 
	\vspace{.2cm}

	\smallskip
	
	In this paper, we study two problems, namely, 
	\begin{enumerate}
		\item We consider the generalised power sums of the form $\lambda_1\alpha_1^\ell+\cdots +\lambda_k\alpha_k^\ell$ where $\lambda_i, \alpha_j$ are algebraic numbers and $\ell\geq 1$ is any integer. If the power sum is an algebraic integer for infinitely many $\ell$'s, then under what conditions, we can conclude $\alpha_j$'s are algebraic integers?
	 As applications of this result (Theorem \ref{TH:ALMOST-VANISHING/ALG-INT} below),  we consider the analogous situation over polynomials, group rings, function fields and a linear combination of trace powers of algebraic numbers.    
		\item Some special cases wherein, we can restrict $\ell$ to a finite (effective) set for a generalised power sums.   
	\end{enumerate}
	The main ideas of our work are coherent with those of \cite{corv}, \cite{smit} and \cite{rath}. 
	
	\section{Our Results}
	Given a set of non-zero algebraic numbers $\alpha_1,\dots,\alpha_k$, we partition them into equivalence classes by the following equivalence relation: 
	\begin{equation} \label{eq2.1}
		\alpha_i \sim \alpha_j \text{ if and only if their ratio is a root of unity}. 
	\end{equation}
	The algebraic numbers $\alpha_1, \ldots, \alpha_k$ are said to be {\it non-degenerate} if they have $k$-equivalence classes. 
	In general, by the equivalence relation in \eqref{eq2.1},  a tuple $(\alpha_1,\dots,\alpha_k)$ induces a partition on the index set  $\mathcal{I} =\{1,\dots,k\}$,  that is, $\mathcal{I} = \cup_j I_j$, where for each $j$,  the set $\{\alpha_r \ :  \ r\in I_j\}$ is an equivalence class under \eqref{eq2.1}. 
	
	We denote the field of all algebraic numbers by $\overline{\mathbb{Q}}$ and the ring of all algebraic integers by $\overline{\mathbb{Z}}$,   and we set $\zeta_h:=e^{2\pi i /h}$ for an integer $h\geq 2$.   Now we state one of the main theorems as follows. 
	\begin{Th}\label{TH:ALMOST-VANISHING/ALG-INT}
		Let $\mathcal{L}(X_1,\dots,X_k) =\displaystyle \sum_{i=1}^k \lambda_i X_i$ be a linear form with coefficients $\lambda_i$ in $\overline{\mathbb{Q}}^\times$. Let $(\alpha_1, \ldots, \alpha_k) \in \bar{\mathbb{Q}}^{\times k}$ be a given $k$-tuple of algebraic numbers such that  
 		$$
		\mathcal{L}(\alpha_1^n, \ldots, \alpha_k^n) \in \overline{\mathbb{Z}},
		$$
		for $n$ in an infinite set 
		$\mathfrak{S} \subset \mathbb{N}$.  Then for each subset $I_j$ of $\mathcal{I} = \{1,\dots, k \}$ corresponding to a equivalence class induced by \eqref{eq2.1},  one of the following holds true: 
		\begin{enumerate}
			\item 
			
			\label{COND:VANISHING} 
			We have $\displaystyle\sum_{a \in I_j} \lambda_a \alpha_a^n = 0$ for all but finitely many values of\textbf{ $n \in \mathfrak{S}$.} 
			\item The numbers $\alpha_i$ are algebraic integers for all $i\in I_j$.
		\end{enumerate}
	\end{Th}
	Now we proceed to provide some consequences of the above theorem. Theorem \ref{polynomial} and Theorem \ref{groupring} follow immediately from Theorem \ref{TH:ALMOST-VANISHING/ALG-INT}, and the conditions imposed on these theorems are necessary to ensure that a given algebraic number is an algebraic integer.  
	
	\subsection{Polynomial Values : } Here the linear form $\mathcal{L}(X_1,\dots,X_k)$ can be replaced with a polynomial $P(X_1,\dots,X_k)$  satisfying some mild hypotheses. 
	\begin{Th}\label{polynomial}
		Let $P(X_1,\dots, X_k)$ be a polynomial with algebraic coefficients 
		and assume that $P(X_1,0,\dots,0, 0)$ is a non-constant polynomial. 
		Let $\alpha_1,\dots,\alpha_k$ be multiplicatively independent non-zero algebraic numbers such that   $P(\alpha_1^n,\dots,\alpha_k^n) \in \overline{\Z}$ for infinitely many positive integers $n$. Then the number $\alpha_1$ is an algebraic integer.
	\end{Th}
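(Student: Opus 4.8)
The plan is to expand $P$ into monomials, recognize $P(\alpha_1^n,\dots,\alpha_k^n)$ as a generalised power sum in the variable $n$, and then quote Theorem~\ref{TH:ALMOST-VANISHING/ALG-INT}. Multiplicative independence of $\alpha_1,\dots,\alpha_k$ will play exactly one role: it forces the base numbers appearing in this power sum to be a non-degenerate family, so the ``vanishing sum'' alternative of Theorem~\ref{TH:ALMOST-VANISHING/ALG-INT} cannot occur and we land directly in the ``algebraic integer'' alternative.

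First I would write $P(X_1,\dots,X_k)=\sum_{\mathbf{j}\in S} c_{\mathbf{j}}\, X_1^{j_1}\cdots X_k^{j_k}$, where $S$ is the finite set of exponent vectors $\mathbf{j}=(j_1,\dots,j_k)\in\mathbb{Z}_{\geq 0}^k$ actually occurring, so that $c_{\mathbf{j}}\in\overline{\mathbb{Q}}^{*}$ for every $\mathbf{j}\in S$. Setting $\beta_{\mathbf{j}}:=\alpha_1^{j_1}\cdots\alpha_k^{j_k}\in\overline{\mathbb{Q}}^{*}$ (nonzero since each $\alpha_i\neq 0$), we get
\[
P(\alpha_1^n,\dots,\alpha_k^n)=\sum_{\mathbf{j}\in S} c_{\mathbf{j}}\,\beta_{\mathbf{j}}^{\,n},
\]
which is the value at $\overline{\beta^n}$ of the linear form $\mathcal{L}\big((X_{\mathbf{j}})_{\mathbf{j}\in S}\big)=\sum_{\mathbf{j}\in S} c_{\mathbf{j}} X_{\mathbf{j}}$. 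By hypothesis this value lies in $\overline{\mathbb{Z}}\setminus\{0\}$ for all $n$ in an infinite set $\mathfrak{S}$.

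The only thing requiring an argument is that $\{\beta_{\mathbf{j}}:\mathbf{j}\in S\}$ is non-degenerate (in particular the $\beta_{\mathbf{j}}$ are pairwise distinct, so no monomials silently collapse after evaluation). Indeed, if $\beta_{\mathbf{j}}/\beta_{\mathbf{j}'}$ were a root of unity for some $\mathbf{j},\mathbf{j}'\in S$, raising to its order $N$ gives $\alpha_1^{N(j_1-j_1')}\cdots\alpha_k^{N(j_k-j_k')}=1$, and multiplicative independence forces $N(j_i-j_i')=0$ for every $i$, i.e.\ $\mathbf{j}=\mathbf{j}'$. Hence, applying Theorem~\ref{TH:ALMOST-VANISHING/ALG-INT} (and the Remark following it, since every equivalence class is a singleton and the coefficients $c_{\mathbf{j}}$ are nonzero, ruling out condition~\eqref{COND:VANISHING}), we conclude that $\beta_{\mathbf{j}}$ is an algebraic integer for every $\mathbf{j}\in S$.

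Finally I would isolate $\alpha_m$. Since $P(0,\dots,0,X_m,0,\dots,0)$ is non-constant, there is some $\mathbf{j}\in S$ with $j_i=0$ for all $i\neq m$ and $j_m=d\geq 1$; for this $\mathbf{j}$ we have $\alpha_m^{\,d}=\beta_{\mathbf{j}}\in\overline{\mathbb{Z}}$, so $\alpha_m$ is a root of the monic polynomial $X^d-\alpha_m^{\,d}\in\overline{\mathbb{Z}}[X]$ and is therefore an algebraic integer, proving Theorem~\ref{polynomial}. The proof is thus essentially a reduction to Theorem~\ref{TH:ALMOST-VANISHING/ALG-INT}; there is no real obstacle beyond the non-degeneracy verification, and the one point one must not skip is that multiplicative independence is used both to keep the evaluated monomials distinct and to exclude the degenerate (vanishing-sum) case.
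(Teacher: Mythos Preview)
Your proof is correct and follows essentially the same approach as the paper: both expand $P$ into monomials, view $P(\alpha_1^n,\dots,\alpha_k^n)$ as a generalised power sum $\sum c_{\mathbf{j}}\beta_{\mathbf{j}}^{\,n}$, and invoke Theorem~\ref{TH:ALMOST-VANISHING/ALG-INT}. Your handling of the dichotomy is in fact cleaner: you observe directly that multiplicative independence makes the family $\{\beta_{\mathbf{j}}\}$ non-degenerate (singleton equivalence classes), so condition~\eqref{COND:VANISHING} is vacuously excluded; the paper instead enters case~\eqref{COND:VANISHING} and argues toward $\alpha_m$ being a root of unity, which amounts to the same contradiction but is phrased more circuitously.
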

	
	
	
	\begin{Rem}
		Theorem \ref{polynomial} is no longer true if we remove the condition on $P(X_1, 0, \ldots, 0)$ is a non-constant polynomial. 
		 For instance, let $P(X,Y) = XY + 1$ and fix a prime $p\neq 2$. Choose $\alpha_1= 1/p$ and $\alpha_2 = 2p$ and  note that $P(\alpha_1^n,\alpha_2^n) = 2^n + 1 \in \Z$ for all natural numbers $n$. 	
		 \end{Rem}
	
	\subsection{Group Rings}  Let $\alpha$ be a non-zero algebraic number,  $K_\alpha$ be the Galois closure of $\Q(\alpha)$ and  $G_\alpha = \Gal{K_\alpha/\mathbb{Q}}$ be the Galois group. We identify the index set $\mathcal{I}$ of Theorem \ref{TH:ALMOST-VANISHING/ALG-INT} as the elements of $G_\alpha$, for convenience, and consider the group ring $\bar{\mathbb{Q}}[G_\alpha]$. 
	\begin{Th}\label{groupring}
		Let $f \in \overline{\Q}[G_\alpha]$ be a non-zero element   such that  that  $f(\alpha^n) \in \overline{\Z}\backslash \{0\}$ for $n$ in an infinite subset $\mathfrak{S}\subset\N$.  Then $\alpha$ is an algebraic integer.
	\end{Th}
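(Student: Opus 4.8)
The plan is to strip away the group-ring language so that $f(\alpha^n)$ becomes an ordinary generalised power sum in the conjugates of $\alpha$, and then invoke Theorem \ref{TH:ALMOST-VANISHING/ALG-INT}. Write $f = \sum_{\sigma \in G} c_\sigma\, \sigma$ with $c_\sigma \in \overline{\Q}$, so that $f(\alpha^n) = \sum_{\sigma \in G} c_\sigma\, \sigma(\alpha^n) = \sum_{\sigma \in G} c_\sigma\, \sigma(\alpha)^n$. Since distinct automorphisms may carry $\alpha$ to the same element, let $\alpha_1,\dots,\alpha_r$ be the distinct members of $\{\sigma(\alpha) : \sigma \in G\}$ --- equivalently, the full set of $\Q$-conjugates of $\alpha$, all of which lie in $K$ --- and put $\mu_i = \sum_{\sigma(\alpha)=\alpha_i} c_\sigma \in \overline{\Q}$. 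Then $f(\alpha^n) = \sum_{i=1}^r \mu_i \alpha_i^n$ for every $n$. Let $J = \{\, i : \mu_i \neq 0 \,\}$. Because $f(\alpha^n) \neq 0$ for the infinitely many $n \in \mathfrak{S}$, the index set $J$ is non-empty, and since $\alpha \neq 0$ each $\alpha_i$ lies in $\overline{\Q}^{*}$.

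Next I would apply Theorem \ref{TH:ALMOST-VANISHING/ALG-INT} to the linear form $\mathcal{L}\bigl((X_i)_{i\in J}\bigr) = \sum_{i\in J} \mu_i X_i$, the tuple $(\alpha_i)_{i\in J}$, and the same infinite set $\mathfrak{S}$ (note $\mathcal{L}\bigl((\alpha_i^n)_{i\in J}\bigr) = f(\alpha^n) \in \overline{\Z}\backslash\{0\}$ for $n \in \mathfrak{S}$, exactly as required). This partitions $J$ into the $\sim$-classes $I_j$, and for each $j$ we land in one of two cases: either (a) $\sum_{a\in I_j} \mu_a \alpha_a^n = 0$ for all but finitely many $n$, or (b) $\alpha_a \in \overline{\Z}$ for every $a \in I_j$.

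I would then argue by cases. If (b) occurs for some $j$, pick any $a \in I_j$: then $\alpha_a$ is an algebraic integer, and since $\alpha$ and $\alpha_a$ are $\Q$-conjugates they have a common minimal polynomial over $\Q$, which is therefore monic with coefficients in $\Z$; hence $\alpha \in \overline{\Z}$ and we are done. If, on the contrary, case (a) held for \emph{every} $j$, then summing those identities over $j$ --- legitimate because the $I_j$ partition $J$ --- would give $f(\alpha^n) = \sum_{i\in J}\mu_i\alpha_i^n = 0$ for all but finitely many $n$; as $\mathfrak{S}$ is infinite, some $n \in \mathfrak{S}$ would then satisfy $f(\alpha^n) = 0$, contradicting the hypothesis. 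So case (b) must hold for at least one $j$, and the theorem follows.

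I do not anticipate a genuine difficulty here: Theorem \ref{TH:ALMOST-VANISHING/ALG-INT} does all the real work. The only points demanding attention are the bookkeeping needed to present that theorem with an honest $\overline{\Q}^{*}$-linear form over a non-empty index set --- namely amalgamating the $\sigma$'s with equal image $\sigma(\alpha)$ and discarding those conjugates whose aggregated coefficient $\mu_i$ vanishes --- together with the elementary remark that being an algebraic integer is stable under passing to Galois conjugates, which is precisely what converts ``$\alpha_a \in \overline{\Z}$'' into ``$\alpha \in \overline{\Z}$''.
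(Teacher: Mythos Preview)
Your proposal is correct and follows essentially the same route as the paper: rewrite $f(\alpha^n)$ as a generalised power sum in the conjugates of $\alpha$, apply Theorem~\ref{TH:ALMOST-VANISHING/ALG-INT}, and derive a contradiction from the case where every equivalence class yields a vanishing subsum. Your version is in fact a bit more careful than the paper's, which applies Theorem~\ref{TH:ALMOST-VANISHING/ALG-INT} directly to the tuple $(\sigma(\alpha))_{\sigma\in G}$ without explicitly amalgamating repeated conjugates or discarding zero coefficients; your bookkeeping with $\mu_i$ and $J$ addresses precisely these points.
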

	
	\begin{Rem}
		In particular, Theorem \ref{groupring} proves the following: {\it If $\sigma(\alpha^n) - \alpha^n$ is a non-zero algebraic integer for infinitely many $n$, then $\alpha$ is an algebraic integer}. However, the same conclusion is not possible by applying the trace operator because $\Tr_{K/\Q} ( \sigma(\alpha^n) - \alpha^n) = 0$.   One notes that this fact can be obtained by applying the number field version of Ridout's theorem, see for instance,   Corollary 1.2 in Chapter 7 of S. Lang \cite{lang} (or Theorem D.2.1 in \cite{Hindry}).	\end{Rem}
	
	\subsection{Action under the trace map}
	
	Let $\alpha$ be a non-zero algebraic number of degree $d$ and let $\alpha = \alpha_1, \ldots, \alpha_d$ be all the Galois conjugates of $\alpha$. If $P(X)\in \mathbb{Q}[X]$ is a non-zero polynomial of $\alpha$, then 
	$$
	{\mathrm{Tr}}_{K/\mathbb{Q}}(P(\alpha^n)) =Q(\alpha_1^n, \ldots, \alpha_d^n)
	$$
	where $Q(X_1, \ldots, X_d) = P(X_1)+\cdots+P(X_d)$. Though the polynomial $Q$ satisfies the hypothesis of Theorem \ref{polynomial}, the numbers $\alpha_1, \alpha_2, \ldots, \alpha_d$ need not be multiplicatively independent. However, we have the following theorem for the trace operator.

	\begin{Th}\label{TH:PHILI-RATH-POLY-ANALOG}
		Let $\alpha$ be a non-zero algebraic number and let $P(X)=\lambda_D X^D+\cdots+\lambda_0\in \bar{\mathbb{Q}}[X]$ be a non-constant polynomial of degree $D$ and let $L = \mathbb{Q}(\lambda_0, \ldots, \lambda_D, \alpha)$.   If  ${\mathrm{Tr}}_{L/\mathbb{Q}}(P(\alpha^n))\in \mathbb{Z}$  for each $n$ in  an infinite set $\mathfrak{S}$ of natural numbers, then either  $\alpha$ is an algebraic integer or for each $i= 1,2,\ldots, D$, we have  ${\mathrm{Tr}}_{L/\mathbb{Q}}(\lambda_i \alpha^{i n})=0$ for all but finitely many $n\in \mathfrak{S}$.    	\end{Th}

	When we consider a multivariable generalisation of the above theorem, we may have that the trace operator vanishes for a subsum for trivial reasons. For simplicity,  we consider a linear form in several variables.
	\begin{Th}\label{trace}
		Suppose $\alpha_1,\dots,\alpha_k,\lambda_1,\dots \lambda_k$ be distinct non-zero algebraic numbers. Let $L= \Q(\alpha_i,\lambda_i~|~ 1 \le i \le k)$,  $K$ be its Galois closure and $h$ be the order of the torsion subgroup of $K^\times$ over $\Q$.  Suppose 
		$
		\mathrm{Tr}_{L/\Q}(\lambda_1\alpha_1^n+\cdots+\lambda_k\alpha^n_k) \in \Z
		$
		for $n$ in an infinite subset $\mathfrak{S}\subset\N$.  If $\alpha_1$ is not an algebraic integer, then there exists an integer $a \in \{0,\dots,h-1\}$ and a subset $I$ of $\{1,\dots,k\}$ containing $1$ such that 
		$$
		\mathrm{Tr}_{L/\Q} \left(\sum_{i\in {I}} \lambda_i \alpha_i^a\right) =0
		$$
		where for each $i \in {I}$, there exists $\sigma_i \in \mathrm{Gal}(K/\Q)$, the Galois group of $K$,  such that $\sigma_i(\alpha_i)/\alpha_1$ is a root of unity.
	\end{Th}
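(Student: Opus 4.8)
The plan is to reduce everything to Theorem~\ref{TH:ALMOST-VANISHING/ALG-INT}, applied not to the trace itself but to its expansion over the \emph{whole} Galois group, and then to reassemble a trace out of the vanishing subsum the theorem produces. Let $M$ be the Galois closure of $L/\Q$, put $G=\Gal{M/\Q}$ and $H=\Gal{M/L}$, so that $\mathrm{Tr}_{L/\Q}(\beta)=\tfrac{1}{|H|}\sum_{\sigma\in G}\sigma(\beta)$ for every $\beta\in L$. Multiplying the hypothesis by $|H|$, the generalised power sum
\[
S(n):=\sum_{\sigma\in G}\ \sum_{i=1}^{k}\sigma(\lambda_i)\,\sigma(\alpha_i)^{n}=|H|\,\mathrm{Tr}_{L/\Q}(\lambda_1\alpha_1^{n}+\cdots+\lambda_k\alpha_k^{n})
\]
lies in $\Z\setminus\{0\}\subseteq\overline{\Z}\setminus\{0\}$ for every $n\in\mathfrak{S}$; its $|G|k$ coefficients $\sigma(\lambda_i)$ are nonzero and its bases $\sigma(\alpha_i)$ are nonzero. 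So Theorem~\ref{TH:ALMOST-VANISHING/ALG-INT} applies with index set $G\times\{1,\dots,k\}$: it splits this set into classes for the relation $(\sigma,i)\sim(\tau,j)\iff\sigma(\alpha_i)/\tau(\alpha_j)$ is a root of unity, and on each class either the associated subsum vanishes for all but finitely many $n\in\mathfrak{S}$, or every base in the class is an algebraic integer.

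Next I would isolate the class $J$ containing $(\mathrm{id},1)$. Its base is $\alpha_1$, which is not an algebraic integer, so the second alternative is excluded for $J$ and $\sum_{(\sigma,i)\in J}\sigma(\lambda_i)\,\sigma(\alpha_i)^{n}=0$ for all but finitely many $n\in\mathfrak{S}$. For $(\sigma,i)\in J$, both $\sigma(\alpha_i)$ and $\alpha_1$ lie in $M$, so $\zeta_{\sigma,i}:=\sigma(\alpha_i)/\alpha_1$ is a root of unity in $M$ and hence $\zeta_{\sigma,i}^{h}=1$, since $h$ is the order of the group of roots of unity in $M$. Dividing the vanishing relation by $\alpha_1^{n}\neq0$ gives $\sum_{(\sigma,i)\in J}\sigma(\lambda_i)\,\zeta_{\sigma,i}^{n}=0$ for all large $n\in\mathfrak{S}$; the left-hand side depends on $n$ only through $n\bmod h$, so picking a residue $a\in\{0,\dots,h-1\}$ attained by infinitely many members of $\mathfrak{S}$, the relation holds at $n=a$, and multiplying back by $\alpha_1^{a}$ yields
\[
\sum_{(\sigma,i)\in J}\sigma\bigl(\lambda_i\alpha_i^{a}\bigr)=0 .
\]
This is the only place where the torsion order $h$ of the Galois closure is used.

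The crucial step is to convert this irregular vanishing subsum into a trace. Left multiplication by $\rho\in G$ on the first coordinate permutes the $\sim$-classes (if $\sigma(\alpha_i)/\tau(\alpha_j)$ is a root of unity, so is $\rho\sigma(\alpha_i)/\rho\tau(\alpha_j)$), and applying $\rho$ to the displayed identity shows that the ``$a$-th power subsum'' vanishes on \emph{every} class in the $G$-orbit $\{J^{(1)}=J,\dots,J^{(m)}\}$ of $J$. Using the description $\rho J=\{(\tau,i):\tau(\alpha_i)/\rho(\alpha_1)\text{ is a root of unity}\}$, one sees that $(\sigma,i)$ lies in this orbit exactly when $\sigma(\alpha_i)$ is a root of unity times some conjugate of $\alpha_1$, and — by applying a suitable element of $G$ — that for a fixed $i$ this holds either for all $\sigma\in G$ or for none. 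Hence, setting $P=\{\,i:\sigma_i(\alpha_i)/\alpha_1\text{ is a root of unity for some }\sigma_i\in G\,\}$, the disjoint union $\bigsqcup_{\ell}J^{(\ell)}$ equals $G\times P$, and summing the (vanishing) $a$-th power subsums over $\ell$ gives
\[
|H|\,\mathrm{Tr}_{L/\Q}\Bigl(\sum_{i\in P}\lambda_i\alpha_i^{a}\Bigr)=\sum_{i\in P}\sum_{\sigma\in G}\sigma\bigl(\lambda_i\alpha_i^{a}\bigr)=0 .
\]
Since $|H|\neq0$, this is the desired vanishing; clearly $1\in P$ (take $\sigma_1=\mathrm{id}$), and restricting each $\sigma_i$ to $L$ gives the stated root-of-unity condition for every $i\in P$.

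Finally $P$ is a \emph{proper} subset of $\{1,\dots,k\}$: if $P=\{1,\dots,k\}$ then $G\times P$ is the whole index set, so the orbit of $J$ is the entire partition; each class $J^{(\ell)}=\rho_\ell J$ contains $(\rho_\ell,1)$, whose base $\rho_\ell(\alpha_1)$ is a conjugate of $\alpha_1$ and hence, like $\alpha_1$, not an algebraic integer, so every class is of vanishing type; summing would give $S(n)=0$ for all large $n\in\mathfrak{S}$, contradicting $S(n)\neq0$ for all $n\in\mathfrak{S}$ and $\mathfrak{S}$ infinite. I expect the main obstacle to be exactly this bookkeeping: recognising that passing to the full $G$-orbit of the offending class turns an a priori irregular vanishing subsum into one indexed by a complete ``rectangle'' $G\times P$ (which is what makes it a genuine $L/\Q$-trace), and separately ruling out $P=\{1,\dots,k\}$ via the non-vanishing hypothesis; the extraction of the bounded exponent $a$ and the reduction to the full Galois group are routine by comparison.
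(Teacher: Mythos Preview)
Your proof is correct, and its overall architecture---expand the trace as a generalised power sum indexed by $G\times\{1,\dots,k\}$, apply Theorem~\ref{TH:ALMOST-VANISHING/ALG-INT}, and exploit the dichotomy on the class $J$ containing $(\mathrm{id},1)$---matches the paper's. The reassembly step, however, is genuinely different. The paper stays inside the single class $I_1=J$: it invokes Proposition~\ref{PROP:SUBGROUP-VANISHING} to rewrite that class as $\{(\sigma\tau_i,i):\sigma\in H_1,\ i\in P\}$ for a common subgroup $H_1$ (after checking $H_i=H_j$), so that the vanishing subsum takes the form $\sum_{\sigma\in H_1}\sigma\bigl(\sum_{i\in P}\tau_i(\lambda_i\alpha_i^{a})\bigr)=0$, and then applies $\Tr_{L/\Q}$ term-by-term, using Galois invariance of the trace to strip the $\sigma$'s and $\tau_i$'s. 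Your route instead passes to the whole $G$-orbit of $J$: applying each $\rho\in G$ to the single-class identity yields the analogous vanishing on $\rho J$, and since $\bigsqcup_\ell J^{(\ell)}=G\times P$, summing gives $\sum_{\sigma\in G}\sum_{i\in P}\sigma(\lambda_i\alpha_i^{a})=0$ directly, which is already $|H|$ times the desired trace. This bypasses Proposition~\ref{PROP:SUBGROUP-VANISHING} entirely and makes the ``rectangle'' structure $G\times P$ explicit; the paper's version is a bit more economical (one class, one application of trace) but leans on the auxiliary subgroup lemma. You also supply the argument that $P$ is \emph{proper}---via the observation that every class in the orbit contains some $(\rho,1)$ and hence is of vanishing type, forcing $S(n)=0$ for cofinitely many $n\in\mathfrak{S}$ if $P=\{1,\dots,k\}$---which the paper's written proof does not address.
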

	
	We have the following interesting corollary as a consequence of this result. 
	\begin{Cor}\label{COR:MULTISUM-TRACE-INTEGERS}
		Let $\alpha_1,\ldots,\alpha_k, \lambda_1,\ldots,\lambda_k$ be non-zero algebraic numbers. Let $L = \mathbb{Q}(\alpha_i,\lambda_i~|~ 1 \le i \le k)$, $K$ be its Galois closure and $h$ be the order of the torsion subgroup of the multiplicative group $K^\times$.   Suppose  that no subsum of $\lambda_1\alpha_1^a+\cdots +\lambda_k \alpha^a_k$ vanishes, under the trace map,   for each $a\in \{0, 1, \ldots,h-1\}$. If ${\mathrm{Tr}}_{L/\mathbb{Q}}(\lambda_1\alpha_1^n+\cdots +\lambda_k \alpha^n_k)\in\mathbb{Z}$ for infinitely many natural numbers $n$, then each $\alpha_i$ is an algebraic integer for all $i=1,\ldots,k$.   
	\end{Cor}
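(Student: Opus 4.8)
The plan is to obtain Corollary \ref{COR:MULTISUM-TRACE-INTEGERS} from Theorem \ref{trace} by contraposition applied to each coordinate, using the permutation symmetry of the hypotheses. Assume the hypotheses of the corollary and suppose, for contradiction, that some $\alpha_j$ is not an algebraic integer. Both standing hypotheses of the corollary — that $\mathrm{Tr}_{L/\mathbb{Q}}(\lambda_1\alpha_1^n+\cdots+\lambda_k\alpha_k^n)\in\mathbb{Z}\setminus\{0\}$ for infinitely many $n$, and that no proper subsum of $\lambda_1\alpha_1^a+\cdots+\lambda_k\alpha_k^a$ vanishes under the trace map for any $a\in\{0,\dots,h-1\}$ — are invariant under permuting the index set $\{1,\dots,k\}$, so after relabelling we may assume $j=1$. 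This relabelling changes neither $L$ nor its Galois closure $K$, hence not the integer $h$; so the $h$ occurring in the hypothesis of Theorem \ref{trace} is exactly the one in the corollary.

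Before invoking Theorem \ref{trace} I would dispose of the only mismatch between its hypotheses and those of the corollary: Theorem \ref{trace} asks that $\alpha_1,\dots,\alpha_k,\lambda_1,\dots,\lambda_k$ be pairwise distinct. If some $\alpha_i$'s coincide, one merges the corresponding terms, replacing $\lambda_i\alpha_i^n+\lambda_{i'}\alpha_i^n$ by $(\lambda_i+\lambda_{i'})\alpha_i^n$; the ``no vanishing proper subsum'' hypothesis forces $\lambda_i+\lambda_{i'}\neq 0$ (otherwise the two-term subsum $\lambda_i\alpha_i^a+\lambda_{i'}\alpha_{i'}^a\equiv 0$ would have vanishing trace for every $a$), and merging only shrinks the family of proper subsums, so the corollary's hypotheses are inherited by the merged expression; coincidences among the $\lambda_i$, or between some $\lambda_i$ and some $\alpha_j$, are harmless for the argument underlying Theorem \ref{trace} and can be handled in the same spirit. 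This reduction is pure bookkeeping, which I expect to be routine.

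Now apply Theorem \ref{trace}: since $\alpha_1\notin\overline{\mathbb{Z}}$, there exist $a\in\{0,\dots,h-1\}$ and a proper subset $P\subsetneq\{1,\dots,k\}$ with $1\in P$ such that $\mathrm{Tr}_{L/\mathbb{Q}}\bigl(\sum_{i\in P}\lambda_i\alpha_i^a\bigr)=0$. But $\sum_{i\in P}\lambda_i\alpha_i^a$ is a proper subsum of $\lambda_1\alpha_1^a+\cdots+\lambda_k\alpha_k^a$ with exponent $a\in\{0,\dots,h-1\}$, so this directly contradicts the standing hypothesis of the corollary. Hence no $\alpha_j$ can fail to be an algebraic integer, and every $\alpha_i\in\overline{\mathbb{Z}}$. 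I do not foresee a genuine obstacle: the corollary is essentially the contrapositive of Theorem \ref{trace} read coordinate-by-coordinate, and the only points deserving care are verifying the permutation-invariance of the hypotheses (so that the relabelling is legitimate) and the routine passage to the pairwise-distinct setting demanded by Theorem \ref{trace}.
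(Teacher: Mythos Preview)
Your proposal is correct and is exactly the approach the paper intends: the corollary is stated immediately after Theorem~\ref{trace} as a direct consequence, with no separate proof given, and your contrapositive argument (relabel so the non-integer is $\alpha_1$, apply Theorem~\ref{trace}, and note that the resulting vanishing proper subsum contradicts the hypothesis) is precisely how one reads off the corollary. Your attention to the ``distinct'' hypothesis in Theorem~\ref{trace} is more careful than the paper itself, which does not comment on this point; your merging argument is sound, and in fact inspection of the proof of Theorem~\ref{trace} shows the distinctness plays no essential role there.
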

	 The above corollary can be considered as a multidimensional generalization of a result of P. Philippon and P. Rath in \cite{rath}.
	
	\subsection{Determining the nature of rational functions}
	We consider a function field (of characteristic $0$) analogue of Theorem \ref{TH:ALMOST-VANISHING/ALG-INT} in the simplest setting after imposing some additional restrictions.
	 
	\begin{Th}\label{rationalfunction}
		Let $f_1(X),\dots,f_k(X)$ be non-constant rational functions with algebraic coefficients and $\lambda_1,\dots, \lambda_k$ be non-zero algebraic numbers.  Assume that the ratio $f_i(X)/f_j(X)$ is not a constant function for each $i \neq j$.  If 
		\begin{equation} \label{eq2.2}
			\sum_{i=1}^k \lambda_i (f_i(X))^n \in \overline{\Z}[X] 
		\end{equation}
		for $n$ in an infinite subset $\mathfrak{S}\subset \mathbb{N}$, then each $f_i(X) \in \overline{\Z}[X]$.
	\end{Th}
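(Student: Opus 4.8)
### Proof plan for Theorem \ref{rationalfunction}

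\textbf{Setup via a common parametrization.} The first step is to reduce the rational-function statement to a statement about algebraic numbers to which Theorem \ref{TH:ALMOST-VANISHING/ALG-INT} applies. I would pick a value $x = \beta$, with $\beta$ algebraic, that is \emph{generic} in the sense that $\beta$ is not a pole or zero of any $f_i$, and such that the numbers $\alpha_i := f_i(\beta)$ are all nonzero algebraic numbers; there are only finitely many bad $\beta$, so such choices are abundant. The hypothesis \eqref{eq2.2} says that for each $n \in \mathfrak{S}$ the polynomial $\sum_i \lambda_i f_i(x)^n$ lies in $\overline{\Z}[x]$, hence evaluating at $x = \beta$ gives $\sum_i \lambda_i \alpha_i^n = \sum_i \lambda_i f_i(\beta)^n \in \overline{\Z}$ for all $n \in \mathfrak{S}$. (The value could a priori be zero, which is the case that the vanishing alternative of Theorem \ref{TH:ALMOST-VANISHING/ALG-INT} will absorb.) Thus Theorem \ref{TH:ALMOST-VANISHING/ALG-INT} applies with $\overline{\alpha} = (\alpha_1,\dots,\alpha_k) = (f_1(\beta),\dots,f_k(\beta))$: for every equivalence class $I_j$ under \eqref{eq2.1}, either $\sum_{a \in I_j} \lambda_a f_a(\beta)^n = 0$ for all but finitely many $n \in \mathfrak{S}$, or $f_a(\beta) \in \overline{\Z}$ for every $a \in I_j$.

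\textbf{Ruling out the vanishing alternative and the partition being nontrivial.} The hypothesis that $f_i/f_j$ is nonconstant for $i \neq j$ is exactly what forces the partition to be trivial, i.e. each $I_j$ is a singleton, for all but finitely many choices of $\beta$. Indeed, if $a,b$ lay in the same class $I_j$ for the parameter $\beta$, then $f_a(\beta)/f_b(\beta)$ would be a root of unity. If this happened for infinitely many $\beta$, then (since roots of unity of bounded degree form a finite set, and $f_a/f_b$ has bounded degree as a rational function) the rational function $f_a/f_b$ would take a fixed root-of-unity value $\zeta$ infinitely often, forcing $f_a/f_b \equiv \zeta$, contradicting the non-constancy assumption. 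Hence, for all but finitely many $\beta$, every class is a singleton $\{i\}$, and the vanishing alternative for a singleton class reads $\lambda_i f_i(\beta)^n = 0$, impossible since $\lambda_i \neq 0$ and $f_i(\beta) \neq 0$. Therefore, for all but finitely many algebraic $\beta$, Theorem \ref{TH:ALMOST-VANISHING/ALG-INT} yields $f_i(\beta) \in \overline{\Z}$ for every $i = 1,\dots,k$.

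\textbf{From ``integral at almost all algebraic points'' to ``integral coefficients''.} It remains to upgrade: a rational function $f(x) = g(x)/h(x)$ (in lowest terms, with algebraic coefficients) such that $f(\beta) \in \overline{\Z}$ for all but finitely many algebraic $\beta$ must in fact lie in $\overline{\Z}[x]$. First, $f$ must be a polynomial: if $h$ had a root $\gamma \in \overline{\Q}$, then choosing $\beta$'s in an appropriate number field converging $\mathfrak{p}$-adically toward $\gamma$ (or simply noting that for $\beta$ near $\gamma$ with respect to a place above a suitable prime, $f(\beta)$ has negative valuation) contradicts integrality at infinitely many points; more cleanly, one can argue that $1/h$ is integral at all but finitely many algebraic points forces $h$ to be a unit, i.e. constant — this is a standard consequence of the product formula / the fact that a non-unit algebraic number has a place of negative valuation, applied along a suitable sequence of specializations. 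So write $f = \sum_j c_j x^j \in \overline{\Q}[x]$. Now for a fixed rational prime $p$, choose a place $v$ of $\overline{\Q}$ above $p$; I claim $v(c_j) \ge 0$ for all $j$. Evaluating $f$ at $d+1$ values $\beta_0,\dots,\beta_d$ (where $d = \deg f$) that are $v$-adic units and $v$-adically well separated (e.g. lifts of distinct residues), Lagrange interpolation expresses each $c_j$ as a $\mathcal{O}_v$-linear combination — with denominators that are $v$-units because the relevant Vandermonde determinant is a $v$-unit — of the values $f(\beta_i) \in \overline{\Z}$, hence $v(c_j) \ge 0$. Since this holds for every prime $p$ and every place above it, $c_j \in \overline{\Z}$ for all $j$, i.e. $f_i(x) \in \overline{\Z}[x]$ for each $i$, completing the proof.

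\textbf{Main obstacle.} The conceptual core — the application of Theorem \ref{TH:ALMOST-VANISHING/ALG-INT} after a generic specialization — is straightforward. The step requiring genuine care is the last one: making rigorous the passage from ``$f(\beta) \in \overline{\Z}$ for almost all algebraic $\beta$'' to ``$f \in \overline{\Z}[x]$'', in particular the claim that $f$ is forced to be a polynomial and the interpolation argument controlling denominators at each place. One must be careful that ``almost all'' (all but finitely many) is enough: the number of exclusions in Step 2 is finite, so one can always find, for any given prime $p$, enough good specialization points that are $p$-adic units in a suitable residue field, which is what the pigeonhole/interpolation argument needs. This is where I expect the bulk of the technical writing to go.
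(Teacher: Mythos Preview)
Your overall architecture matches the paper's: specialize at a generic point, apply Theorem \ref{TH:ALMOST-VANISHING/ALG-INT} to get $f_i(\beta)\in\overline{\Z}$, then upgrade from integral values to integral coefficients. The paper carries out the last step differently: to show $f_i\in\mathcal{O}_K[x]$ once it is known to be a polynomial, the paper passes to a \emph{ramified} extension with ramification index $m>2d$ and evaluates at a uniformizer, so that the monomials $b_j\alpha^j/\beta$ acquire pairwise distinct valuations and the minimum is visibly negative. Your Lagrange interpolation over lifts of distinct residues is a legitimate alternative and arguably more standard; just note that you may need to pass to an unramified extension so that the residue field has at least $d+1$ elements and so that enough lifts land in the cofinite good set.

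Two points need tightening. First, your claim that only finitely many $\beta$ make $f_a(\beta)/f_b(\beta)$ a root of unity rests on these roots of unity having bounded order; this is false if $\beta$ is allowed to range over all of $\overline{\Q}$. The fix, which is exactly what the paper does, is to work inside a fixed number field $K$ containing all the coefficients: then $f_a(\beta)/f_b(\beta)\in K$, so its order divides $h=|\mu(K)|$, and the bad set is contained in the finitely many zeros of $(f_a/f_b)^h-1$. Second, Theorem \ref{TH:ALMOST-VANISHING/ALG-INT} requires $\mathcal{L}(\overline{\alpha^n})\in\overline{\Z}\setminus\{0\}$, and the vanishing alternative does not ``absorb'' the zero case as you suggest. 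The correct order is: first establish non-degeneracy of $(f_1(\beta),\dots,f_k(\beta))$ for $\beta$ in the good set, then invoke Proposition \ref{PROP:KUL-FINITE-VANISHING} to conclude that $\sum_i\lambda_i f_i(\beta)^n=0$ for at most finitely many $n$, and only then apply Theorem \ref{TH:ALMOST-VANISHING/ALG-INT} on the remaining infinite subset of $\mathfrak{S}$.
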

	
	
	It might be possible to show each of the functions $f_i(X)$ are polynomial functions without appealing to the subspace theorem. However, the approach here is to deduce the nature of rational function via its specialisations. 
	\smallskip
	
	\subsection{Determining algebraic integers in finite iteration}
	In the work of B. de Smit \cite{smit}, a finite bound on $\ell$ in $\Tr_{\Q(\alpha)/\Q} (\alpha^\ell)$ was given in order to determine whether $\alpha$ is an algebraic integer. We provide the following generalizations: 
	\begin{Th}\label{thm1.1}
		Let $\alpha_1$ be a nonzero algebraic number  and $\alpha_2, \ldots, \alpha_k$ be  all the other Galois conjugates of $\alpha_1$ for some integer $k\geq 2$.   Let $K$ be the Galois closure of $\mathbb{Q}(\alpha_1)$  and $[K:\mathbb{Q}] = d\geq k$. For any integers $b_1, \ldots, b_k$ (not necessarily distinct) such that $b_1+\cdots+b_k = n\ne 0$,  if $\displaystyle{\mathrm{Tr}}_{K/\mathbb{Q}}(b_1\alpha_1^j+\cdots+b_k\alpha_k^j) \in \mathbb{Z}$ for all $j = 1, 2, \ldots, d+d[\log_2(nd)]+1$, then $\alpha_1$ is an algebraic integer and so is $\alpha_j$ for each $j \geq 2$.
	\end{Th}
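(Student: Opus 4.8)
The plan is to reduce everything to the one-variable Pólya–de Smit situation and then invoke the known finite-iteration bound for traces of pure powers. First I would observe that $\mathrm{Tr}_{K/\Q}(b_i \alpha_i^j)$ does not depend on which conjugate $\alpha_i$ we picked once we group terms correctly: since $\alpha_2,\ldots,\alpha_k$ are the conjugates of $\alpha_1=\alpha$, each $\alpha_i^j$ is a conjugate of $\alpha^j$, and the Galois group permutes the conjugates with multiplicities determined by $[K:\Q(\alpha)]$. Hence $\mathrm{Tr}_{K/\Q}(b_1\alpha_1^j+\cdots+b_k\alpha_k^j)$ is a $\Z$-linear combination of the quantities $\mathrm{Tr}_{K/\Q}(\alpha_i^j)$, and more importantly it can be rewritten as $\frac{[K:\Q(\alpha)]}{\ }$ times a sum over the distinct conjugates. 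The upshot I want is: there is an algebraic number $\beta$ (a suitable $\Z$-linear combination, or rather a single conjugate class scaled appropriately) such that the hypothesis forces $\mathrm{Tr}_{K/\Q}(\beta^j)\in\Z$ for the stated range of $j$, \emph{or} one exploits that $b_1+\cdots+b_k=n\neq 0$ to extract the "$j=0$'' normalization $\mathrm{Tr}_{K/\Q}(b_1+\cdots+b_k)=nd$, which is automatically an integer and gives the base case.

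The key mechanism is Newton's identities run in reverse. Let $p_j := \mathrm{Tr}_{K/\Q}(b_1\alpha_1^j+\cdots+b_k\alpha_k^j)$ for $j\geq 1$ and set $p_0 = nd$. These $p_j$ are (up to the weights $b_i$ and the conjugacy multiplicities) generalized power sums of the full set of $d$ conjugates of $\alpha$ counted appropriately; the point is that the denominators that can appear in the elementary symmetric functions of $\alpha$ — equivalently in the coefficients of the minimal polynomial — are controlled once we know $p_0,p_1,p_2,p_3$, because those four values pin down the "leading data'' (a common denominator, analogous to how $m_0,\ldots,m_3$ control the constant $C$ in Theorem \ref{finite}). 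Concretely, I would argue: if $\alpha$ is not an algebraic integer, then some prime $\mathfrak{p}$ of $K$ has $v_{\mathfrak{p}}(\alpha)<0$, and then the linear recurrence satisfied by $(p_j)$ has a characteristic root of negative valuation. A standard estimate shows that if such a recurrence of order $\leq d$ takes integer values for $j=0,1,\ldots,N$ with $N$ of size roughly $d + d\log_2(\text{stuff})$, the $\mathfrak{p}$-adic absolute values are forced to stay bounded, contradicting the growth $|p_j|_{\mathfrak p}\to\infty$ unless the contributing coefficient vanishes — and the weights $b_i$ with $\sum b_i = n\neq 0$ are exactly what guarantees non-vanishing of that coefficient. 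Running this at every bad prime gives that $\alpha$ has no bad primes, i.e.\ $\alpha\in\overline{\Z}$, and then each conjugate $\alpha_j$ is an algebraic integer as well.

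The main obstacle I anticipate is making the counting $j = 1,\ldots, d + d[\log_2(nd)]+1$ come out \emph{exactly}, rather than merely "effective.'' This requires a careful de Smit–style argument: one must track how the $\mathfrak p$-adic valuation of the relevant elementary symmetric function (or of $\det$ of a Vandermonde-type system in the $\alpha_i^j$) decreases, show that the "defect'' halves — or rather the $2$-adic size of an integer denominator is cut down — at each doubling of $j$, which is where the $\log_2$ enters, and then account for the extra factor of $n$ coming from the weights $b_i$ (hence $\log_2(nd)$ rather than $\log_2 d$). I would structure this as: (i) set up the denominator/ideal $\mathfrak d$ measuring failure of integrality; (ii) show $p_1,\ldots,p_d$ being integral forces $\mathfrak d$ to be supported on a bounded set of primes with bounded exponents in terms of $p_0,\ldots,p_3$; (iii) show each further block of $d$ consecutive integral values of $p_j$ strictly reduces $v_{\mathfrak p}(\mathfrak d)$ by a factor of $2$ in the relevant sense; (iv) conclude after $[\log_2(nd)]+1$ blocks that $\mathfrak d$ is trivial. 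Step (iii) is the delicate one and is where I expect the real work to lie.
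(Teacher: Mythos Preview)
Your opening observation is correct and in fact stronger than you seem to realize: since the $\alpha_i$ are the Galois conjugates of $\alpha=\alpha_1$ over $\Q$ and the trace is $\mathrm{Tr}_{K/\Q}$ with $K/\Q$ Galois, one has $\mathrm{Tr}_{K/\Q}(\alpha_i^j)=\mathrm{Tr}_{K/\Q}(\alpha^j)$ for every $i$, and hence
\[
\mathrm{Tr}_{K/\Q}\bigl(b_1\alpha_1^j+\cdots+b_k\alpha_k^j\bigr)=(b_1+\cdots+b_k)\,\mathrm{Tr}_{K/\Q}(\alpha^j)=n\,\mathrm{Tr}_{K/\Q}(\alpha^j).
\]
So the hypothesis is exactly $\mathrm{Tr}_{K/\Q}(\alpha^j)\in\frac{1}{n}\Z$ for $j$ in the stated range, i.e.\ the theorem is \emph{equivalent} to its own Corollary~\ref{cor1.1.1} with $q=n$. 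This is a cleaner reduction than the paper's, which carries the weighted multi-term sum through the whole $p$-adic argument rather than collapsing it at the outset.

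Where your proposal goes wrong is in the mechanism for the one-variable de~Smit bound that remains to be proved. Three concrete issues. First, Newton's identities are not the engine here: you are not reconstructing elementary symmetric functions of $\alpha$ from the $p_j$. Second, the sentence ``contradicting the growth $|p_j|_{\mathfrak p}\to\infty$'' is false as written, since $p_j\in\Z$ forces $|p_j|_p\le 1$; there is no $p$-adic blow-up of the trace values themselves. Third, and most importantly, your step~(iii) --- ``each further block of $d$ consecutive integral values reduces $v_{\mathfrak p}(\mathfrak d)$ by a factor of $2$'' --- is not the source of the $\log_2$. The actual mechanism (which is what the paper does, following de~Smit) is \emph{additive}: over $\Q_p$, if $\alpha$ is not $p$-integral then $\alpha^{-1}\in\mathcal O_K$ and its characteristic polynomial has all non-leading coefficients in $p\Z_p$ (Lemma~\ref{lem1}); this yields a recurrence expressing $\alpha^\ell$ as a $p\Z_p$-combination of $\alpha^{\ell+1},\dots,\alpha^{\ell+d}$. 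Iterating, one shows that the $\Z_p$-module $M_\ell$ generated by $d$ consecutive values satisfies $M_{\ell}\subset \mathfrak P\,M_{\ell+d}$, hence the ``$j=0$'' value $nd$ lands in $\mathfrak P^{m+1}M_{dm}$ for every $m\ge 0$. If traces of $M_{dm}$ lie in $\Z_p$ for $m$ up to $v_p(nd)$, one concludes $nd\in p^{\,v_p(nd)+1}\Z_p$, a contradiction. The bound $[\log_2(nd)]+1$ on the number of blocks enters only through the trivial inequality $v_p(nd)\le\log_2(nd)$ valid for every prime $p\ge 2$; there is no halving.
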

	We now look at a multidimensional analogue of B. de Smit's result more generally, not just for trace operators, but for a  given linear recurrence sequence. Let $\alpha_i\in \bQ^\times$ be distinct algebraic numbers, $\lambda_i \in \bQ^\times$ for all $1 \le i \le k$, $K = \Q(\alpha_1,\dots,\alpha_k,\lambda_1,\dots \lambda_k)$ and $\mathcal{O}_K$ be its ring of integers.  
	We ask the following question:
	\begin{Ques}
		Does there exist a constant $C$ (depending on  $\alpha_i$ and $\lambda_i$) such that  
		\[ \lambda_1\alpha_1^\ell+\cdots+\lambda_k\alpha_k^\ell \in \mathcal{O}_K \text { for all } 1\le \ell \le C \implies \alpha_j \in \mathcal{O}_K \textrm{ for all }1 \le j \le k? \]
	\end{Ques}
	When $k=1$, one can answer the above question positively with $C = 1 + \max_\mathfrak{p} |v_\mathfrak{p}(\lambda_1)|$ where $\mathfrak{p}$ runs through the non-zero prime ideals in $\mathcal{O}_K$ and $v_\mathfrak{p}$ denotes the valuation at $\mathfrak{p}$.  Here, we answer this question when $k=2$ and highlight some difficulties in generalising this proof for arbitrary $k$ in Remark \ref{REM:TH-finite-difficulties}. 
	\begin{Th}\label{finite}
		Let $\alpha_1, \alpha_2, \lambda_1,\lambda_2$ be given non-zero algebraic numbers. Let $K = \Q(\lambda_1,\lambda_2, \alpha_1, \alpha_2)$ and let 
		$$
		C = 2+ \left\lceil\frac{1}{2}\max_{\mathfrak{P}}\left|v_\mathfrak{P}(\lambda_1\lambda_2(\alpha_1-\alpha_2)^2)\right|\right\rceil+\max_{\mathfrak{P}}\left|v_\mathfrak{P}(\lambda_1\lambda_2)\right|
		$$
		where $\mathfrak{P}$ runs through the non-zero prime ideals in $\mathcal{O}_K$. 
				If 
		$$
		\lambda_1\alpha_1^\ell+\lambda_2\alpha_2^\ell \in \mathcal{O}_K \text{ for all } 1\le \ell \le C,
		$$  
		then  $\alpha_1,\alpha_2 \in \mathcal{O}_K.$
	\end{Th}
	\section{Preliminaries}
	In this section, we state the propositions/theorems required for the proof of our theorems. We also need some results which are applications of  the Schmidt Subspace Theorem,  formulated by Evertse and Schlickewei.  For a reference,  see (\cite[Chapter 7]{bomb},  \cite[Chapter V, Theorem $1D^\prime$]{schmidt}, and  \cite[Page 16, Theorem II.2]{zannier}).
	
	Let $K$ be a number field which is a Galois extension over $\mathbb{Q}$. Let $M_K$ be the set of all places on $K$ and $M_\infty$ be the set of all archimedean places on $K$. 
	For each place $w\in M_K$, let $K_w$ denote the completion of the number field $K$ with respect to $w$ and $d(w)=[K_w:\mathbb{Q}_\mathit{v}]$, where $\mathit{v}$ is the restriction of $w$ to $\mathbb{Q}$. 
	For  every $w\in M_K$ whose restriction on $\mathbb{Q}$ is $v$ and $\alpha\in K$, we define the  normalized  absolute value $|\cdot|_w$ as follows:
	\begin{equation}\label{eq3}
		|\alpha|_w:=|\mbox{Norm}_{K_w/\mathbb{Q}_v}(\alpha)|^{\frac{1}{[K:\mathbb{Q}]}}_v.
	\end{equation}
	Indeed if $w\in M_\infty$, then there exists an automorphism $\sigma\in\mbox{Gal}(K/\mathbb{Q})$ of $K$ such that for all $x\in K$, 
	$$
	|x|_w=|\sigma(x)|^{d(K)/[K:\mathbb{Q}]},
	$$
	where $d(K) =1$ if $K\subset \mathbb{R}$ and $d(K) = 2$  otherwise.  Note that since $K$ is Galois over $\mathbb{Q}$,  the embeddings are either totally real or totally complex, and hence the function $d(K)$ is constant.  Thus, under the definition \eqref{eq3},  the product formula   $\displaystyle\prod_{\omega\in M_K}|x|_\omega=1$ holds true for any  $x\in K^\times$.
	For a non zero vector $\mathbf{x} = (x_1, \ldots, x_n)\in K^n$, the {\it projective height},  $H(\mathbf{x})$, is defined by 
	$$
	H(\mathbf{x})=\prod_{\omega\in M_K}\mbox{max}\{|x_1|_\omega,\ldots,|x_n|_\omega\}. 
	$$
We require the following results from \cite{kul}. 	
	\begin{Prop}\label{PROP:KUL-FINITE-VANISHING}{\rm(A. Kulkarni, N. Mavraki and K. D. Nguyen \cite{kul})}~ 
		Let $\alpha_1,\ldots,\alpha_k$  be non-degenerate  non-zero algebraic numbers and let $\lambda_1,\ldots, \lambda_k$ be non-zero algebraic numbers.  Then there are at most  finitely many natural numbers $n$ satisfying 
		$$
		\lambda_1\alpha^n_1+\cdots+\lambda_k\alpha^n_k=0.
		$$
	\end{Prop}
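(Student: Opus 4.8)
The plan is to argue by contradiction, combining an induction on $k$ with the finiteness of the solution set of an $S$-unit equation, which is itself a consequence of the Subspace Theorem in the form of Evertse and Schlickewei referenced above. For $k=1$ there is nothing to prove, since $\lambda_1\alpha_1^n=0$ has no solutions when $\lambda_1,\alpha_1\in\bQ^*$. So assume $k\geq 2$, assume the statement for every collection of fewer than $k$ terms, and suppose for contradiction that $\mathfrak{T}=\{\,n\in\N:\ \lambda_1\alpha_1^n+\cdots+\lambda_k\alpha_k^n=0\,\}$ is infinite.

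First I would dispose of the degenerate behaviour. If for infinitely many $n\in\mathfrak{T}$ some proper nonempty subsum $\sum_{i\in I}\lambda_i\alpha_i^n$ vanishes, then, since there are only finitely many subsets $I\subsetneq\{1,\dots,k\}$, the pigeonhole principle yields a single such $I$ for which $\sum_{i\in I}\lambda_i\alpha_i^n=0$ holds for infinitely many $n$. But $\{\alpha_i:i\in I\}$ is again a non-degenerate collection — any sub-collection of a non-degenerate collection is non-degenerate — and $|I|<k$, so the induction hypothesis gives a contradiction. Hence, after discarding finitely many elements, I may assume that no proper subsum of $\lambda_1\alpha_1^n+\cdots+\lambda_k\alpha_k^n$ vanishes for any $n\in\mathfrak{T}$, with $\mathfrak{T}$ still infinite.

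Next, fix a number field $K$ containing all the $\alpha_i$ and $\lambda_i$, and a finite set of places $S$ of $K$ containing the archimedean ones and large enough that every $\alpha_i$ and every $\lambda_i$ is an $S$-unit. Then for each $n\in\mathfrak{T}$ the tuple $(\lambda_1\alpha_1^n,\dots,\lambda_k\alpha_k^n)$ consists of $S$-units, sums to zero, and has no vanishing proper subsum; hence the associated projective points $(\lambda_1\alpha_1^n:\cdots:\lambda_k\alpha_k^n)\in\mathbb{P}^{k-1}(K)$ lie in a finite set by the finiteness theorem for $S$-unit equations. Since $\mathfrak{T}$ is infinite, there exist $n_1\neq n_2$ in $\mathfrak{T}$ with $(\lambda_1\alpha_1^{n_1}:\cdots:\lambda_k\alpha_k^{n_1})=(\lambda_1\alpha_1^{n_2}:\cdots:\lambda_k\alpha_k^{n_2})$, i.e. $\lambda_i\alpha_i^{n_1}=c\,\lambda_i\alpha_i^{n_2}$ for all $i$ and a common $c\in K^\times$. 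Cancelling $\lambda_i$ gives $\alpha_i^{\,n_1-n_2}=c$ for every $i$, so $(\alpha_1/\alpha_2)^{n_1-n_2}=1$ and $\alpha_1/\alpha_2$ is a root of unity, contradicting non-degeneracy (this is where $k\geq 2$ is used). This completes the induction.

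The main obstacle is securing the correct quantitative input: the whole argument rests on the finiteness of the set of projective non-degenerate solutions of the $S$-unit equation $x_1+\cdots+x_k=0$, which is the Evertse–Schlickewei consequence of the Subspace Theorem. Once that is available, the rest is bookkeeping — the pigeonhole reduction to the non-degenerate case handled by the induction, and the elementary observation that two distinct exponents producing the same projective point force a multiplicative relation among the $\alpha_i$. (Alternatively, one could deduce the statement from the Skolem–Mahler–Lech theorem, observing that an infinite arithmetic progression in the zero set would force $\sum_i(\lambda_i\alpha_i^a)\beta_i^m\equiv 0$ with the $\beta_i=\alpha_i^b$ pairwise distinct by non-degeneracy, which is impossible; but the $S$-unit route is closer to the tools used elsewhere in this paper.)
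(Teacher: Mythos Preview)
Your argument is correct. The induction on $k$ together with the pigeonhole reduction to the case of no vanishing proper subsum is sound, and once that reduction is in place the Evertse--Schlickewei finiteness theorem for the $S$-unit equation $x_1+\cdots+x_k=0$ indeed forces two distinct exponents $n_1\neq n_2$ to yield the same projective tuple, whence $(\alpha_i/\alpha_j)^{n_1-n_2}=1$ and non-degeneracy is contradicted. The alternative Skolem--Mahler--Lech route you sketch at the end is also valid.

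As for the comparison you were asked to make: the paper does not actually supply a proof of this proposition. It is stated in the preliminaries as a result quoted from Kulkarni, Mavraki and Nguyen \cite{kul}, with no argument given, and is only used as a black box (notably in the proof of Theorem~\ref{trace-diophantine}). So there is no ``paper's own proof'' to weigh your approach against; you have, in effect, filled in a proof the authors chose to import by citation. Your $S$-unit argument is in the same circle of ideas as Theorem~\ref{TH:KUL-SUBSPACE} used elsewhere in the paper, so it meshes naturally with the surrounding toolkit.
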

	\begin{Th}\label{TH:KUL-SUBSPACE}{\rm(A. Kulkarni, N. Mavraki and K. D. Nguyen \cite{kul})}~    
		Let $K$ be a number field which is  Galois  over $\mathbb{Q}$  and $S$  be a finite set of places, containing all  the archimedean places.  Let 
		$\lambda_1,\ldots,\lambda_k$  be non-zero elements of $K$.  Let  $\varepsilon>0$  be a positive real number and $\omega\in S$  be a distinguished place.  Let $\mathfrak{E}$ be the set of  all  $(u_1,\ldots,u_k) \in (\mathcal{O}_S^\times)^k$ which satisfy  the inequality 
		\begin{equation}\label{eq3.1}
			0<\left|\sum_{j=1}^k\lambda_j u_j\right|_\omega\leq \frac{\max\{|u_1|_\omega,\ldots,|u_k|_\omega\}}{H(u_1,\ldots,u_k,1)^\varepsilon},
		\end{equation}
		where $\mathcal{O}_S^\times$ is the ring of $S$-units in $K$.  If $\mathfrak{E}$ is an infinite set, then there exist $c_1, \ldots, c_k\in K$, not all zero, such that    
		$$
		c_1  u_1+\cdots+c_k  u_k=0 
		$$ 
		holds  true for infinitely many elements $(u_1, \ldots, u_k)$ of $\mathfrak{E}$.
	\end{Th}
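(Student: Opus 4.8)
The plan is to deduce Theorem~\ref{TH:KUL-SUBSPACE} directly from the Subspace Theorem, in the Evertse--Schlickewei form over number fields (e.g.\ \cite[Chapter~7]{bomb} or \cite[Page~16, Theorem~II.2]{zannier}), applied to a suitable system of $k$ linearly independent linear forms chosen at each place of $S$. The key leverage is that the coordinates $u_j$ are $S$-units, so the product formula collapses most of the terms in the relevant double product down to the quantity $\bigl|\sum_j\lambda_ju_j\bigr|_\omega$ that \eqref{eq3.1} controls.

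First I would normalize the situation. For $\mathbf{u}=(u_1,\dots,u_k)\in\mathfrak{E}$ pick an index $i_0(\mathbf{u})$ realizing $\max_{1\le j\le k}|u_j|_\omega$; since there are only $k$ possible values, the pigeonhole principle yields an infinite subset $\mathfrak{E}'\subseteq\mathfrak{E}$ on which $i_0(\mathbf{u})$ equals a fixed index $i_0$. It suffices to produce $(c_1,\dots,c_k)\ne\mathbf{0}$ with $c_1u_1+\dots+c_ku_k=0$ for infinitely many $\mathbf{u}\in\mathfrak{E}'$. Next I would choose, for each $v\in S$, a system $L_{v,1},\dots,L_{v,k}$ of linearly independent linear forms in $\mathbf{X}=(X_1,\dots,X_k)$: at the distinguished place $\omega$ take the forms $\sum_{j=1}^{k}\lambda_jX_j$ and $X_i$ for $i\in\{1,\dots,k\}\setminus\{i_0\}$, which are independent because $\lambda_{i_0}\ne0$; at every other $v\in S$ take $X_1,\dots,X_k$. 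For $\mathbf{u}\in\mathfrak{E}'$ every form is nonzero on $\mathbf{u}$ (the $u_i$ are $S$-units and $\sum_j\lambda_ju_j\ne0$ by hypothesis), so the double product makes sense and is positive. Since each $u_j$ is an $S$-unit, $|u_j|_v=1$ and hence $\|\mathbf{u}\|_v=1$ for all $v\notin S$, whence the product formula gives $\prod_{v\in S}|u_j|_v=1$ for each $j$ and $\prod_{v\in S}\|\mathbf{u}\|_v=H(u_1,\dots,u_k)$. Substituting the chosen forms and letting the coordinate contributions telescope, one is left with
$$
\prod_{v\in S}\ \prod_{i=1}^{k}\frac{\bigl|L_{v,i}(\mathbf{u})\bigr|_v}{\|\mathbf{u}\|_v}
\;=\;\frac{\bigl|\lambda_1u_1+\dots+\lambda_ku_k\bigr|_\omega}{|u_{i_0}|_\omega\,H(u_1,\dots,u_k)^{k}}.
$$
Feeding in \eqref{eq3.1}, together with $|u_{i_0}|_\omega=\max_j|u_j|_\omega$ and $H(u_1,\dots,u_k,1)\ge H(u_1,\dots,u_k)$, this double product is positive and at most $H(u_1,\dots,u_k)^{-(k+\varepsilon)}$ for every $\mathbf{u}\in\mathfrak{E}'$.

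Since $\mathfrak{E}'$ is infinite and $k+\varepsilon$ exceeds the critical exponent $k$ for $k$ forms in $k$ variables, the Subspace Theorem forces all these vectors $\mathbf{u}$ into a finite union of proper linear subspaces of $K^{k}$. Pigeonholing once more, one such subspace $\{c_1X_1+\dots+c_kX_k=0\}$ with $(c_1,\dots,c_k)\ne\mathbf{0}$ must contain infinitely many of them, and this is exactly the asserted vanishing relation, valid for infinitely many $(u_1,\dots,u_k)\in\mathfrak{E}'\subseteq\mathfrak{E}$.

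The only point demanding care is the bookkeeping behind the displayed identity: matching the normalizations of the absolute values $|\cdot|_v$ and of the projective height so that the exponent comes out exactly $k+\varepsilon$, and observing that the harmless constants $\|L_{v,i}\|_v$ (which depend only on the $\lambda_j$) can be absorbed after deleting finitely many $\mathbf{u}$ of bounded height. The degenerate case $k=1$, where \eqref{eq3.1} can hold for at most finitely many $u_1$ so that $\mathfrak{E}$ is already finite and the statement is vacuous, should be treated separately, as should any bounded-height subsequence of $\mathfrak{E}'$, which is handled at once by Northcott's theorem.
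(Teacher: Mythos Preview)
The paper does not actually prove Theorem~\ref{TH:KUL-SUBSPACE}; it merely quotes it from \cite{kul} and remarks parenthetically that it is ``derived from'' the Subspace Theorem. Your proposal supplies exactly such a derivation, and it is correct: the choice of linear forms (coordinate forms everywhere, except that at the distinguished place $\omega$ you replace $X_{i_0}$ by $\sum_j\lambda_jX_j$), the telescoping of the double product via the product formula for $S$-units, and the final pigeonhole into a single hyperplane are all standard and sound. Since the paper gives no argument of its own, there is nothing to compare; your approach is the expected one and is consistent with the paper's remark.

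One small sharpening: your treatment of the bounded-height case is slightly elliptical. Northcott bounds only the set of \emph{projective} points $[u_1:\cdots:u_k]$, not the tuples themselves, so infinitely many tuples of bounded projective height must be proportional to a single $(v_1,\dots,v_k)$; then $v_2u_1-v_1u_2=0$ (valid since all $v_i\ne 0$) furnishes the required relation for $k\ge2$. It would be worth making this half-line explicit.
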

	We prove the following proposition regarding the elements of Galois group fixing the equivalence class of a non zero algebraic element $\alpha$ (given by \eqref{eq2.1}). This is of independent interest.
	\begin{Prop}\label{PROP:SUBGROUP-VANISHING}
		Let $\alpha$ be a non-zero  algebraic number, and $K$ be any Galois extension of $\Q$ containing $\alpha$ with its Galois group $G= \mathrm{Gal}({K/\Q})$. Let 
		$$
		H = \left\{\sigma \in G \ :  \ \sigma(\alpha) \sim \alpha\right\}$$
		be a subset of $G$.  Then the following statements are true; 
		\begin{enumerate}
			\item[(1)] $H$ is a subgroup of $G$. 
			\item[(2)] For any given $\tau \in G$, we have, $\{ \sigma \in G ~|~ \sigma(\tau(\alpha)) \sim \tau(\alpha) \} = \tau H \tau^{-1}$.
		\end{enumerate}
	\end{Prop}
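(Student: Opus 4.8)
The plan is to prove both parts directly from the definition of the equivalence relation $\sim$ in \eqref{eq2.1}, namely that $\beta \sim \gamma$ iff $\beta/\gamma$ is a root of unity, together with the fact that elements of $G$ are field automorphisms of $L$ fixing $\Q$ and hence send roots of unity to roots of unity.

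\emph{Part (1).} First I would observe that the identity $\mathrm{id} \in H$ since $\alpha/\alpha = 1$. Next, for closure: if $\sigma, \tau \in H$, write $\sigma(\alpha)/\alpha = \zeta$ and $\tau(\alpha)/\alpha = \eta$ with $\zeta, \eta$ roots of unity. Then
$$
\frac{\sigma\tau(\alpha)}{\alpha} = \frac{\sigma(\tau(\alpha))}{\sigma(\alpha)} \cdot \frac{\sigma(\alpha)}{\alpha} = \sigma\!\left(\frac{\tau(\alpha)}{\alpha}\right)\zeta = \sigma(\eta)\zeta,
$$
and $\sigma(\eta)$ is a root of unity because $\sigma$ is a ring homomorphism (if $\eta^m = 1$ then $\sigma(\eta)^m = \sigma(\eta^m) = \sigma(1) = 1$). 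A product of two roots of unity is a root of unity, so $\sigma\tau \in H$. Finally, for inverses: if $\sigma \in H$ with $\sigma(\alpha)/\alpha = \zeta$, then applying $\sigma^{-1}$ gives $\alpha/\sigma^{-1}(\alpha) = \sigma^{-1}(\zeta)$, hence $\sigma^{-1}(\alpha)/\alpha = \sigma^{-1}(\zeta)^{-1}$, again a root of unity. Thus $H$ is a subgroup. (Since $G$ is finite, closure alone suffices, but it is cleaner to record the inverse computation as well.)

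\emph{Part (2).} Fix $\tau \in G$ and put $\beta = \tau(\alpha)$. I claim $\sigma(\beta) \sim \beta$ iff $\sigma \in \tau H \tau^{-1}$, i.e. iff $\tau^{-1}\sigma\tau \in H$. Writing $\rho = \tau^{-1}\sigma\tau$, note
$$
\frac{\sigma(\beta)}{\beta} = \frac{\sigma(\tau(\alpha))}{\tau(\alpha)} = \tau\!\left(\frac{\tau^{-1}\sigma\tau(\alpha)}{\alpha}\right) = \tau\!\left(\frac{\rho(\alpha)}{\alpha}\right).
$$
Since $\tau$ and $\tau^{-1}$ both map roots of unity to roots of unity, $\sigma(\beta)/\beta$ is a root of unity if and only if $\rho(\alpha)/\alpha$ is, i.e. if and only if $\rho \in H$, i.e. $\sigma \in \tau H \tau^{-1}$. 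This gives the asserted equality of sets.

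There is no serious obstacle here; the only thing to be careful about is the bookkeeping in the ``cocycle-style'' identities $\sigma\tau(\alpha)/\alpha = \sigma(\tau(\alpha)/\alpha)\cdot(\sigma(\alpha)/\alpha)$, and the repeated (but elementary) use of the fact that any $\sigma \in G$ permutes the roots of unity in $L$ — which is immediate since $\sigma$ is a field automorphism and the roots of unity of a given order are exactly the roots in $L$ of $X^m - 1$. I would state that fact once at the start and then invoke it freely.
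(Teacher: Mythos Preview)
Your proof is correct and follows essentially the same approach as the paper's: both hinge on the observation that every $\sigma\in G$ permutes the roots of unity in $L$, and deduce closure/inversion (respectively, the conjugacy relation) directly from that. The paper's argument is simply terser---it asserts closure and inversion in one line and for part~(2) writes the chain $\sigma(\alpha)\sim\alpha \Leftrightarrow \tau(\sigma(\alpha))\sim\tau(\alpha)\Leftrightarrow (\tau\sigma\tau^{-1})(\tau\alpha)\sim\tau(\alpha)$---whereas you spell out the cocycle-style identities explicitly; there is no substantive difference.
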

	\begin{proof}
		If $\sigma_a \in H$ then we have $\sigma_a(\alpha) = \zeta_h^{w_a} \alpha$ for some integer $w_a$. Since $G$ permutes the roots of unity, so does $H$. This shows that $H$ is closed under inversion and composition of automorphisms in $G$. Hence $H$ is a subgroup of $G$. This proves (1). 
		
		Let $\sigma \in H$ be any element. Then we have 
		\[\sigma(\alpha) \sim \alpha \Leftrightarrow \tau(\sigma(\alpha)) \sim \tau(\alpha) \Leftrightarrow (\tau\sigma\tau^{-1})(\tau\alpha) \sim \tau(\alpha), \]
		proving the second statement.
	\end{proof}
	We shall state the following basic and well-known lemma which roughly says `integrality' is a local phenomenon. 
	
	\begin{Lemma}\label{lem0}
		Let $\alpha\in \bar{\mathbb{Q}}$ be an algebraic number. Then $\alpha$ is an algebraic integer if and only if $\alpha$ is integral over $\mathbb{Z}_p$ for every prime number $p$ where $\mathbb{Z}_p$ is the ring of $p$-adic integers.
	\end{Lemma}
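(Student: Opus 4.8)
The forward implication is immediate: if $\alpha$ is a root of a monic polynomial with coefficients in $\mathbb{Z}$, the same polynomial is monic over $\mathbb{Z}_p$ for every prime $p$, since $\mathbb{Z}\subseteq\mathbb{Z}_p$. The content of the lemma is the converse, and the plan is to reduce the global assertion ``$\alpha\in\overline{\mathbb{Z}}$'' to its local counterparts through the minimal polynomial of $\alpha$.

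First I would let $f(x)\in\mathbb{Q}[x]$ be the monic minimal polynomial of $\alpha$ over $\mathbb{Q}$, and recall that $\alpha$ is an algebraic integer precisely when $f\in\mathbb{Z}[x]$. Since a rational number that is a $p$-adic integer for every prime $p$ has denominator $\pm 1$, we have $\mathbb{Z}=\bigcap_p(\mathbb{Z}_p\cap\mathbb{Q})$, so it suffices to prove that $f\in\mathbb{Z}_p[x]$ for each fixed prime $p$.

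Fix $p$ and factor $f=m_1\cdots m_r$ into monic irreducibles in $\mathbb{Q}_p[x]$; as $f$ is separable (being irreducible over a field of characteristic zero) these factors are distinct, and they are exactly the minimal polynomials over $\mathbb{Q}_p$ of the conjugates of $\alpha$, one for each place of $\mathbb{Q}(\alpha)$ above $p$. Integrality of $\alpha$ over $\mathbb{Z}_p$ — understood as the condition that every $\mathbb{Q}_p$-conjugate of $\alpha$ lies in the integral closure $\overline{\mathbb{Z}_p}$ of $\mathbb{Z}_p$ in $\overline{\mathbb{Q}_p}$, equivalently that $v(\alpha)\ge 0$ for every place $v$ of $\mathbb{Q}(\alpha)$ above $p$ — forces every root of $f$ in $\overline{\mathbb{Q}_p}$ to lie in $\overline{\mathbb{Z}_p}$. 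Hence the coefficients of $f$, which up to sign are the elementary symmetric functions of these roots, lie in $\overline{\mathbb{Z}_p}$; being also rational, they lie in $\overline{\mathbb{Z}_p}\cap\mathbb{Q}_p=\mathbb{Z}_p$, because $\mathbb{Z}_p$ is integrally closed in $\mathbb{Q}_p$. Thus $f\in\mathbb{Z}_p[x]$. Equivalently, one may start from a monic $g\in\mathbb{Z}_p[x]$ with $g(\alpha)=0$, note that each $m_i$ divides $g$ in $\mathbb{Q}_p[x]$, and apply Gauss's lemma over the discrete valuation ring $\mathbb{Z}_p$ (a monic divisor of a monic polynomial in $\mathbb{Z}_p[x]$ again lies in $\mathbb{Z}_p[x]$) to get each $m_i\in\mathbb{Z}_p[x]$, whence $f=\prod_i m_i\in\mathbb{Z}_p[x]$.

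Running this for every $p$ yields $f\in\mathbb{Z}[x]$, i.e. $\alpha\in\overline{\mathbb{Z}}$. The only genuine subtlety — the point I would flag — is the correct reading of ``integral over $\mathbb{Z}_p$'': it must refer to all $\mathbb{Q}_p$-conjugates (all places above $p$) rather than to a single fixed embedding $\overline{\mathbb{Q}}\hookrightarrow\overline{\mathbb{Q}_p}$, since an element that is a unit at one prime of $\mathbb{Q}(\alpha)$ above $p$ but has negative valuation at another would otherwise be a counterexample. With that convention everything else is routine.
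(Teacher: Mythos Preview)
The paper does not prove this lemma; it merely records it as ``basic and well-known'' and moves on. So there is no proof to compare yours against, and your task was really to supply a proof the authors omitted.

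Your argument is correct. The reduction to showing $f\in\mathbb{Z}_p[x]$ for each $p$ via $\mathbb{Z}=\bigcap_p(\mathbb{Z}_p\cap\mathbb{Q})$ is the natural one, and either route you give (elementary symmetric functions of the roots, or Gauss's lemma over the DVR $\mathbb{Z}_p$) finishes cleanly.

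The caveat you raise is genuine and worth keeping: for $\alpha\in\overline{\mathbb{Q}}$ the phrase ``integral over $\mathbb{Z}_p$'' is only unambiguous once one fixes an embedding $\overline{\mathbb{Q}}\hookrightarrow\overline{\mathbb{Q}_p}$, and the lemma as stated is true only under the reading ``for every such embedding'' (equivalently, $v(\alpha)\ge 0$ at every place of $\mathbb{Q}(\alpha)$ above $p$). Your example-in-spirit is real: an element like $(2+i)/5\in\mathbb{Q}(i)$ is a $5$-adic unit under one embedding and has negative valuation under the other, yet is not an algebraic integer. The paper's later use of the lemma (in the proof of Theorem~\ref{thm1.1}) is consistent with this reading: they assume $\alpha_1$ is \emph{not} an algebraic integer and pick a prime $p$ and an embedding at which $\alpha_1$ fails to be integral, which is exactly what the ``for all embeddings'' interpretation guarantees.
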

	
	The following lemma is also basic and well-known and hence we omit the proof here. 
	
	\begin{Lemma} \label{lem1} 
		Let $K$ be a finite extension over $\mathbb{Q}_p$ of degree $d$ where $\mathbb{Q}_p$ is the field of $p$-adic numbers.  Let $\alpha\in K$  be an element such that $\alpha\not\in \mathcal{O}_K$, the local ring of $K$.  Then $\beta := \alpha^{-1} \in \mathcal{O}_K$. Furthermore,  if the characteristic polynomial of $\beta$  is 
		$$f_{K|\mathbb{Q}_p}(x) = a_dx^d+a_{d-1}x^{d-1}+\cdots+a_1x+a_0 \in \mathbb{Z}_p[x],
		$$ 
		then $a_d = 1$ and  $a_i\in p\mathbb{Z}_p$ for all $i = 0, 1,\ldots, d-1$ where $p\mathbb{Z}_p$ is the unique maximal ideal of $\mathbb{Z}_p$.
	\end{Lemma}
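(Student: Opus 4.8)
The plan is to read everything off the valuation of $K$. Write $v$ for the normalized (surjective, $\mathbb{Z}$-valued) valuation on $K$, with valuation ring $\mathcal{O}_K$ and maximal ideal $\mathfrak{m}_K=\{x\in K:v(x)\ge 1\}$. First I would note that the hypothesis $\alpha\notin\mathcal{O}_K$ means $v(\alpha)\le -1$, so $v(\beta)=-v(\alpha)\ge 1$ and hence $\beta\in\mathfrak{m}_K\subseteq\mathcal{O}_K$; this already gives the first assertion, and in fact shows $\beta$ is a nonunit of $\mathcal{O}_K$.

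For the claim about $f_{K|\mathbb{Q}_p}(x)$, read as the characteristic polynomial of the $\mathbb{Q}_p$-linear map ``multiplication by $\beta$'' on the $d$-dimensional $\mathbb{Q}_p$-space $K$, monicity is automatic, so $a_d=1$. Since $\mathcal{O}_K$ is the integral closure of $\mathbb{Z}_p$ in $K$ and $\beta\in\mathcal{O}_K$, the element $\beta$ is integral over the integrally closed ring $\mathbb{Z}_p$, so its minimal polynomial over $\mathbb{Q}_p$ lies in $\mathbb{Z}_p[x]$; as $f_{K|\mathbb{Q}_p}$ is a power of this minimal polynomial, all $a_i\in\mathbb{Z}_p$. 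The remaining point is to upgrade this to $a_i\in p\mathbb{Z}_p$ for $i<d$, i.e. to show $f_{K|\mathbb{Q}_p}(x)\equiv x^d\pmod p$. I would do this over $\overline{\mathbb{Q}_p}$: take the unique extension $w$ of the $p$-adic valuation (with $w(p)=1$), factor $f_{K|\mathbb{Q}_p}(x)=\prod_{j=1}^d(x-\gamma_j)$ where the $\gamma_j$ are the $\mathbb{Q}_p$-conjugates of $\beta$, each with multiplicity $d/[\mathbb{Q}_p(\beta):\mathbb{Q}_p]$, and observe that uniqueness makes $w$ invariant under $\mathrm{Gal}(\overline{\mathbb{Q}_p}/\mathbb{Q}_p)$, so $w(\gamma_j)=w(\beta)>0$ for every $j$. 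Then for $1\le\ell\le d$ the coefficient $a_{d-\ell}=(-1)^{\ell}e_\ell(\gamma_1,\dots,\gamma_d)$ is a sum of products of $\ell$ of the $\gamma_j$, hence $w(a_{d-\ell})\ge\ell\min_j w(\gamma_j)>0$; since $a_{d-\ell}\in\mathbb{Z}_p$ and $w$ is $\mathbb{Z}$-valued on $\mathbb{Z}_p\setminus\{0\}$, this forces $w(a_{d-\ell})\ge 1$, i.e. $a_{d-\ell}\in p\mathbb{Z}_p$. Letting $\ell$ run over $1,\dots,d$ finishes the argument.

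A variant avoiding $\overline{\mathbb{Q}_p}$: pick a $\mathbb{Z}_p$-basis of the free rank-$d$ module $\mathcal{O}_K$; multiplication by $\beta$ is then a matrix $M\in M_d(\mathbb{Z}_p)$ with $\det(xI-M)=f_{K|\mathbb{Q}_p}(x)$, and since $\beta\in\mathfrak{m}_K$ while $\mathfrak{m}_K^{e}=p\mathcal{O}_K$ ($e$ the ramification index), multiplication by $\bar\beta$ on the $d$-dimensional $\mathbb{F}_p$-space $\mathcal{O}_K/p\mathcal{O}_K$ is nilpotent and so has characteristic polynomial $x^d$ — which is $f_{K|\mathbb{Q}_p}(x)\bmod p$. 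I do not anticipate a real obstacle: this is a standard local-field fact. The only places needing care are keeping the normalizations straight (distinguishing the normalized valuation on $K$ from the $p$-adic valuation and its unique extension to $\overline{\mathbb{Q}_p}$) and interpreting ``characteristic polynomial'' as the $K/\mathbb{Q}_p$ one, so that over $\overline{\mathbb{Q}_p}$ it factors into linear terms given by the conjugates of $\beta$.
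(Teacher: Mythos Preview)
Your argument is correct. The paper itself omits the proof of this lemma, stating only that it is ``basic and well-known,'' so there is no proof to compare against; your write-up supplies exactly the standard justification one would expect, and both the valuation/conjugate computation and the nilpotence variant on $\mathcal{O}_K/p\mathcal{O}_K$ are clean and complete.
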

	
	We need the following lemma in the proof of Theorem  \ref{thm1.1}. 
	
	\begin{Lemma}\label{lem2}
		Let $p$ be a given prime number and let $K$ be a finite Galois extension over $\mathbb{Q}_p$ of degree $d$. Let $\alpha\in K$  be an element such that $\alpha\not\in \mathcal{O}_K$  and $\beta$ be a Galois conjugate of $\alpha$. Let $b_1$ and $b_2$ be given $p$-adic integers such that $b_1+b_2 \ne 0$.
		Then for each integer $\ell \geq 0$,   there exists an integer $i \in \{1,2,\ldots, d\}$ such that $b_1\alpha^{\ell+i}+b_2\beta^{\ell+i}\ne 0$. 
	\end{Lemma}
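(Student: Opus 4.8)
The plan is to argue by contradiction: assuming $b_1\alpha^{\ell+i}+b_2\beta^{\ell+i}=0$ for \emph{every} $i\in\{1,\dots,d\}$, I will in fact extract a contradiction already from two consecutive such vanishings. First I record the harmless observations that $\alpha\neq 0$ (since $0\in\mathcal{O}_K$ while $\alpha\notin\mathcal{O}_K$) and that $\beta\neq 0$ (a conjugate of the nonzero number $\alpha$ is a root of the minimal polynomial of $\alpha$, which has nonzero constant term). Next I dispose of the degenerate situations in which the statement is immediate: if $b_1=0$ then $b_2=b_1+b_2\neq 0$ and $b_1\alpha^{\ell+1}+b_2\beta^{\ell+1}=b_2\beta^{\ell+1}\neq 0$, and symmetrically if $b_2=0$; and if $d=1$ then $K=\mathbb{Q}_p$, so $\beta=\alpha$ and $b_1\alpha^{\ell+1}+b_2\beta^{\ell+1}=(b_1+b_2)\alpha^{\ell+1}\neq 0$. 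So from now on $d\geq 2$ and $b_1,b_2$ are both nonzero, and in particular $1,2\in\{1,\dots,d\}$.

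Now suppose toward a contradiction that both $b_1\alpha^{\ell+1}+b_2\beta^{\ell+1}=0$ and $b_1\alpha^{\ell+2}+b_2\beta^{\ell+2}=0$. Multiplying the first relation by $\alpha$ gives $b_1\alpha^{\ell+2}=-b_2\beta^{\ell+1}\alpha$, while the second relation reads $b_1\alpha^{\ell+2}=-b_2\beta^{\ell+2}$; comparing the two yields $b_2\beta^{\ell+1}(\alpha-\beta)=0$. Since $b_2\neq 0$ and $\beta\neq 0$, this forces $\alpha=\beta$, and then the first relation becomes $(b_1+b_2)\alpha^{\ell+1}=0$, contradicting $b_1+b_2\neq 0$ together with $\alpha\neq 0$. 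Hence at least one of $i=1,i=2$ satisfies $b_1\alpha^{\ell+i}+b_2\beta^{\ell+i}\neq 0$, which proves the lemma. Equivalently, one can phrase this via the order-two linear recurrence $u_{n+2}=(\alpha+\beta)u_{n+1}-\alpha\beta\,u_n$ satisfied by $u_n:=b_1\alpha^n+b_2\beta^n$: since $\alpha\beta\neq 0$, two consecutive zeros of $(u_n)$ would propagate backwards to $u_0=b_1+b_2=0$, a contradiction.

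There is no genuinely hard step here; the only thing requiring care is the bookkeeping of the degenerate cases ($b_1$ or $b_2$ zero, $d=1$, $\alpha=\beta$), after which a single multiplication-and-comparison closes the argument. It is worth noting that the hypothesis $\alpha\notin\mathcal{O}_K$ is used only to guarantee $\alpha\neq 0$, so the same conclusion holds for any nonzero $\alpha\in K$, and the bound ``$i\in\{1,\dots,d\}$'' is far from sharp — two consecutive exponents always suffice. The hypothesis $\alpha\notin\mathcal{O}_K$ and the Galois assumption on $K$ are presumably retained only because this is the form in which the lemma is invoked in the proof of Theorem \ref{thm1.1}.
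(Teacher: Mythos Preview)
Your proof is correct and takes a genuinely different, more elementary route than the paper's. The paper argues by induction on $\ell$ via the degree-$d$ relation coming from the characteristic polynomial $f$ of $\alpha^{-1}$: from $f(\alpha^{-1})=f(\beta^{-1})=0$ one obtains
\[
b_1\alpha^{m}+b_2\beta^{m}=-a_{d-1}(b_1\alpha^{m+1}+b_2\beta^{m+1})-\cdots-a_0(b_1\alpha^{m+d}+b_2\beta^{m+d}),
\]
so if all $d$ terms on the right vanished the left side would too; setting $m=0$ starts the induction at $b_1+b_2\neq 0$, and the inductive step shifts $\ell\mapsto\ell+1$. Your argument instead exploits the order-two recurrence with roots $\alpha,\beta$: two consecutive zeros of $u_n=b_1\alpha^n+b_2\beta^n$ force $\alpha=\beta$ and then $(b_1+b_2)\alpha^{\ell+1}=0$, a contradiction. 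This is sharper --- you show that some $i\in\{1,2\}$ already works, not merely some $i\in\{1,\dots,d\}$ --- and makes transparent that $\alpha\notin\mathcal{O}_K$ is used only to guarantee $\alpha\neq 0$. The paper's approach, while less economical here, is natural in context: the same degree-$d$ recurrence is the main engine in the proof of Theorem~\ref{thm1.1}, so the lemma is established with the tool already in hand.
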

	\begin{proof}
		Since $\alpha\not\in\mathcal{O}_K$, by Lemma \ref{lem1}, it is clear that $\alpha^{-1}\in\mathcal{O}_K$ and let $f(x) =  x^d+a_{d-1}x^{d-1}+\cdots+a_1x+a_0  \in \mathbb{Z}_p[x]$ be the characteristic polynomial of $\alpha^{-1}$. Since $\beta$ is a Galois conjugate of $\alpha$, it is clear that  $f(\alpha^{-1}) = 0 = f(\beta^{-1})$.  We see that for every integer $m\geq 0$, we have 
		$$
		\alpha^{m} = - a_{d-1}\alpha^{m+1}-\cdots  - a_0\alpha^{m+d} \mbox{ and }  \beta^{m} = - a_{d-1}\beta^{m+1}-\cdots  - a_0\beta^{m+d}.
		$$
		Therefore,  we get
		\begin{equation}\label{eq3.2}
			b_1\alpha^{m} +b_2\beta^m = -a_{d-1}(b_1\alpha^{m+1}+b_2\beta^{m+1}) - \cdots  -a_0(b_1\alpha^{m+d} +b_2\beta^{m+d}).
		\end{equation}
		
		Now we prove the assertion by induction on $\ell$. 
		
		We put $m = 0$ in \eqref{eq3.2}, we get 
		$$
		0\ne b_1+b_2 = -a_{d-1}(b_1\alpha^{1}+b_2\beta^{1}) - \cdots  -a_0(b_1\alpha^{d} +b_2\beta^{d})  
		$$
		which implies the assertion when $\ell = 0$.   Assume the assertion is true for some integer $\ell > 0$. That is,  there exists an integer $i\in \{1,2,\ldots, d\}$ such that $b_1\alpha^{\ell+i} +b_2\beta^{\ell+i} \ne 0$. If $i\geq 2$, then $\ell + i = \ell +1 + j$ for some integer $j \in \{1,2,\ldots, d\}$ and then we are done. Hence we assume that $i = 1$. That is, $b_1\alpha^{\ell+1}+b_2\beta^{\ell+1} \ne 0$. Now, put $m = \ell +1$ in \eqref{eq3.2} to get the assertion in this case as well.  Hence the lemma.
	\end{proof}
	
	We conclude this section by discussing a proof of Fatou's Lemma. This was proved by Pisot \cite{PISOT} for number fields and the proof was adapted based on Fatou's work \cite{FATOU}.  We provide a slightly different proof along the lines mentioned in B. de Smit \cite{smit}. Before proceeding further, for a non-zero algebraic number $\alpha$, we define the denominator of $\alpha$ (denoted by $\textrm{den}(\alpha)$) to be the smallest positive integer $n$ such that $n\alpha$ is an algebraic integer. 
	\begin{Prop}[Fatou's lemma]
		Let $K$ be a number field and $f(X) \in K(X) \cap \mathcal{O}_K[[X]]$ such that $f(X) = g(X)/h(X)$ where $g(X),h(X) \in K[X]$ are co prime in $K[X]$ and $h(0) = 1$. Then $g(X), h(X) \in \mathcal{O}_K[X]$.
	\end{Prop}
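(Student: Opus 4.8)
The plan is to work locally at each prime and exploit that $f$ has $\mathcal{O}_K$-coefficients in its power series expansion. By Lemma \ref{lem0}, it suffices to show that for every prime $p$ and every place $v$ of $K$ lying over $p$, the coefficients of $g$ and $h$ lie in $\mathcal{O}_v$ (the valuation ring of the $v$-adic completion $K_v$); since $g,h$ are defined over $K$, having all their coefficients integral at every finite place forces them into $\mathcal{O}_K[x]$. So fix a finite place $v$ with normalized absolute value $|\cdot|_v$ and suppose, for contradiction, that some coefficient of $h$ is not a $v$-adic integer. Write $h(x) = 1 + c_1 x + \cdots + c_d x^d$ and let $r = \max_i |c_i|_v > 1$ be the largest $v$-adic absolute value among the coefficients, attained at some index; this $r$ will play the role of an inverse radius of convergence.

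First I would set up Newton-polygon / recursion bookkeeping. Since $f = g/h$ lies in $\mathcal{O}_K[[x]]$, writing $f(x) = \sum_{n\ge 0} a_n x^n$ with all $a_n \in \mathcal{O}_K$, the identity $g = f h$ gives, for all $n$ larger than $\deg g$,
\[
\sum_{j=0}^{d} c_j a_{n-j} = 0, \qquad c_0 = 1,
\]
i.e. $a_n = -\sum_{j=1}^{d} c_j a_{n-j}$. The key point is that the characteristic polynomial of this linear recurrence is (the reciprocal of) $h$, whose reciprocal roots are the poles of $f$. If some $|c_i|_v > 1$, then $h$ has a root $\rho$ with $|\rho|_v < 1$ in $\overline{K_v}$; equivalently $f$ has a pole at a point of $v$-adic absolute value $< 1$. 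I would make this precise via the $v$-adic Newton polygon of $h(x)$: the hypothesis $\max_i |c_i|_v > 1$ together with $|c_0|_v = |h(0)|_v = 1$ forces the Newton polygon of $h$ to have a segment of negative slope, hence a root $\rho \in \overline{K_v}$ with $v(\rho) < 0$, i.e. $|\rho|_v > 1$ — wait, I should be careful with the convention; the reciprocal polynomial $x^d h(1/x)$ has constant term $c_d$ and leading coefficient $1$, so its roots are the inverses of those of $h$, and the poles of $f$ are exactly the roots of $h$. Let me just say: $h$ has a root $\beta \in \overline{K_v}$ with $|\beta|_v < 1$, because the product of the roots of $h$ equals $\pm c_d / 1$ up to sign only if $\deg h = d$... the clean statement is that $\min_\beta |\beta|_v = 1/r < 1$ where the min is over roots of $h$ and $r = \max_i |c_i|_v$, which follows by reading off the lowest slope of the Newton polygon of the reciprocal polynomial. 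Here the coprimality of $g$ and $h$ is essential: it guarantees $\beta$ is a genuine pole of $f$, not cancelled by $g$.

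Next comes the growth estimate that produces the contradiction. Because $\beta$ is an actual pole of $f = g/h$ with $|\beta|_v = 1/r < 1$, the coefficients $a_n$ cannot all stay $v$-adically bounded: near the "smallest" pole, the Taylor coefficients grow like $|\beta|_v^{-n} = r^n \to \infty$ in the $v$-adic absolute value. More precisely, partial-fraction decomposition of $f$ over $\overline{K_v}$, $f(x) = (\text{polynomial}) + \sum_{\text{poles }\beta} \sum_{m\ge 1} \frac{A_{\beta,m}}{(1 - x/\beta)^m}$, gives $a_n = \sum_{\beta,m} A_{\beta,m}\binom{n+m-1}{m-1}\beta^{-n}$, and the term(s) from the pole(s) of smallest $|\beta|_v$ dominate for large $n$ (handling possible ties among poles of equal minimal absolute value by a standard argument — either they don't all cancel, or one passes to a subsequence, or one uses that the $A_{\beta,m}$ are nonzero by coprimality and the binomial/geometric factors can't conspire to cancel for all $n$). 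Hence $|a_n|_v$ is unbounded, contradicting $a_n \in \mathcal{O}_K \subseteq \mathcal{O}_v$. Therefore every $|c_i|_v \le 1$, so $h \in \mathcal{O}_v[x]$ for all finite $v$, giving $h \in \mathcal{O}_K[x]$; then $g = f h \in \mathcal{O}_K[[x]] \cap K[x]$ is a polynomial with coefficients that are $\mathcal{O}_v$-integral at every finite place (being a finite sum of products of $\mathcal{O}_v$-elements), so $g \in \mathcal{O}_K[x]$ as well.

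The main obstacle is the step handling ties among the poles of minimal $v$-adic absolute value: a priori the leading-order terms in the formula for $a_n$ could cancel for every $n$. I expect to resolve this exactly as in the classical archimedean proof of Fatou's lemma — by extracting the precise leading asymptotics using that the relevant $A_{\beta,m}$ are nonzero (here coprimality of $g,h$ is what rules out spurious cancellation), and invoking a non-vanishing statement for exponential polynomials $\sum_\beta (\text{poly in }n)\,\beta^{-n}$ with distinct $\beta$ of equal absolute value; a clean way is to pass to a further finite extension and a suitable arithmetic progression of $n$ (or use a pigeonhole/Dirichlet argument on the arguments of the $\beta$'s) so that one term genuinely dominates. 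Everything else — the Newton polygon reading, the recursion, and the reduction to local statements — is routine.
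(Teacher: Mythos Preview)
Your approach is sound and genuinely different from the paper's. Where you work with the full partial-fraction expansion of $f$ and use the Newton polygon of $h$ to locate a $v$-adically small root, the paper sidesteps partial fractions entirely: after obtaining $c/h(x)\in\mathcal{O}_K[[x]]$ from a B\'ezout relation $rg+sh=c$ (which is also where you are really using coprimality), the paper factors $h(x)=\prod_i(1-a_ix)^{c_i}$ over a finite extension $L$ and then, for each $i$, \emph{multiplies away} all the other factors (and all but one copy of the $i$th) to produce a power series $c'/(1-a_ix)=\sum_n c'a_i^{\,n} x^n\in\mathcal{O}_L[[x]]$. From a single geometric series one reads off $v_{\mathfrak P}(a_i)\ge 0$ immediately. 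This buys two things: no cancellation among competing poles ever arises, and higher multiplicities $c_i>1$ require no extra work.

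Your outline is correct in spirit, but the resolution you sketch for the ``ties'' obstacle is the one genuinely soft spot. The Dirichlet/pigeonhole intuition borrowed from the archimedean proof does not transfer: in the $v$-adic setting all poles of minimal $|\beta|_v$ have \emph{exactly} the same absolute value for every $n$, so no subsequence makes one term dominate in size. The clean non-archimedean fix is a Vandermonde argument: writing $\gamma_i=\beta_i^{-1}$ and using $a_{n+j}=\sum_i A_i\gamma_i^{\,n+j}$ for $j=0,\dots,s-1$, the Vandermonde matrix $(\gamma_i^{\,j})$ is invertible (the $\gamma_i$ are distinct), so each $A_i\gamma_i^{\,n}$ is a \emph{fixed} linear combination of $a_n,\dots,a_{n+s-1}$, hence $v$-adically bounded --- contradicting $|\gamma_i|_v>1$ and $A_i\ne 0$. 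For higher-order poles you would need the confluent Vandermonde. Alternatively, once you have $c/h\in\mathcal{O}_K[[x]]$ you can simply adopt the paper's isolation trick at that point and avoid the issue altogether.
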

	\begin{proof}
		Given that $f(X) = g(X)/h(X)$ where $g(X),h(X) \in K[x]$ are co prime and $h(0)=1$. Then there exist two polynomials $r(X),s(X) \in \mathcal{O}_K[X]$ such that $r(X)g(X)+ s(X)h(X) = c$ for some non zero constant $c \in \mathcal{O}_K$. Therefore, we have 
		\begin{equation}\label{eq3.3}
			\frac{c}{h(X)} \in \mathcal{O}_K[[X]].
		\end{equation}
		Now let $h(X) = \displaystyle\prod_{i=1}^k (1 - a_i X)^{c_i}$ in $\overline{\Q}[X]$ for some integers $c_i \ge 1$ and some distinct $a_i \in \overline{\Q}$. For each $i$ with $1\leq i\leq k$, multiplying \eqref{eq3.3}, by the factor 
		$$
		\left[\prod_{\substack{j=1\\j \neq i}}^k \textrm{den}(a_j)^{c_j} (1 - a_j X)^{c_j}\right] \left[\textrm{den}(a_i)^{c_i-1} (1-a_i X)^{c_i-1}\right],
		$$
		we define a new formal power series $\widetilde{f_i}(X):= c'(1-a_iX)^{-1}$ for some $c'$ depending on $\textrm{den}({a_j})$ and $c_j$ for all $ 1 \le j \le k$. Since we are multiplying by a polynomial whose coefficients are  algebraic integers, we get  $\widetilde{f_i}(X)\in \mathcal{O}_L[[X]]$ for some finite extension $L$ over $K$ in which  $h(X)$ splits completely. Therefore for any non zero prime ideal $\mathfrak{P}$ in $\mathcal{O}_L$, we have 
		\[ v_\mathfrak{P}(c'a_i^n) \ge 0 \text{ for all } n \implies v_\mathfrak{P}(a_i) \ge 0 \]
and consequently, we get $a_i$ is an algebraic integer.  Therefore $h(X) \in K[X] \cap \mathcal{O}_L[X] = \mathcal{O}_K[X]$ and hence $g(X) = f(X)h(X) \in \mathcal{O}_K[X]$ as both $f(X),h(X)$ take values in $\mathcal{O}_K$.
	\end{proof}
	
	\section{Proofs of   Theorems 2.1 to \ref{rationalfunction}}
	\noindent{\bf Proof of Theorem \ref{TH:ALMOST-VANISHING/ALG-INT}.}
	Let $K = \Q(\alpha_1,\dots, \alpha_k)$ be a number field and $h$ be the order of the torsion subgroup of $K^\times$. We partition $\{ \alpha_1,\dots,\alpha_k \} = \displaystyle\bigcup_{l=1}^d \{ \alpha_a~\colon ~a \in I_l \}$ by the equivalence relation in \eqref{eq2.1}. For each index set $I_l$, let $\beta_l$ be a representative of the corresponding equivalence class $\{\alpha_a~\colon~a \in I_l \}$, and for $a \in I_l$, let $\alpha_a = \beta_l \zeta_h^{\omega_{a,l}}$ for some integer $\omega_{a,l}$.  The numbers $\beta_1, \dots, \beta_d$ are representatives (elements) of disjoint equivalence classes and hence the tuple $(\beta_1,\dots,\beta_d)$ is non-degenerate. 
	
	\bigskip
	
Let $l$ be a fixed natural number with $1\leq l\leq d$  and assume that assertion \eqref{COND:VANISHING} is not true for $I_l$.  Then, for infinitely many $n \in \mathfrak{S}$,  the sum $\displaystyle\sum_{a \in I_l} \lambda_a \alpha_a^n \neq 0$.  In particular,  there exists  an infinite subset  $\mathfrak{S}_m := \{n\in \mathfrak{S} \ : \ n\equiv m\mod h\}$ for some $m\in \{0,1,\ldots, h-1\}$ such that  $\displaystyle\sum_{a \in I_l} \lambda_a \alpha_a^n \neq 0$ for each $n\in \mathfrak{S}_m$.   
	Therefore, for each $n \in \mathfrak{S}_m$, we have 
	\[
	\mathcal{L}(\alpha_1^n,\dots,\alpha_k^n) = \sum_{i=1}^k \lambda_i \alpha_i^n = \sum_{r=1}^d \sum_{a \in I_r} \lambda_a \alpha_{a}^n = \sum_{r=1}^d \sum_{a \in I_r} \lambda_a  \zeta_h^{n\omega_{a,r}} \beta_r^n = \sum_{r=1}^d \kappa_{r,m} \beta_r^n =:\mathcal{L}_m(\beta_1^n,\dots,\beta_d^n) , 
	\]
	where $\kappa_{r,m}: =\displaystyle \sum_{a \in I_r} \lambda_{a}  \zeta_h^{m\omega_{a,r}}$, as $n \equiv m \mod h$. 
	Since $\displaystyle\kappa_{l,m} \beta_l^n = \sum_{a \in I_l} \lambda_a \alpha_a^n$, we have $\kappa_{l,m} \neq 0$.  Furthermore, since $(\beta_1, \ldots, \beta_d)$ is a non-degenerate tuple, by Proposition \ref{PROP:KUL-FINITE-VANISHING}, $\mathcal{L}(\beta_1^n,\dots,\beta_d^n) \in \overline{\mathbb{Z}}\backslash\{0\}$ for all but finitely many values of $n \in \mathfrak{S}_m$. We remove the finite numbers $n \in   \mathfrak{S}_m$ for which $\mathcal{L}(\beta_1^n,\dots,\beta_d^n) = 0$. 

	\smallskip
	
	We proceed to prove that $\beta_l$ is an algebraic integer  via contradiction.
	
	 Let $S$ be a suitable finite subset of $M_K$ containing all the archimedean places such that $\beta_j$ is an $S$-unit for each $j = 1,2, \ldots, d$. Assume that $\beta_l$ is not an algebraic integer. Then there exists a finite place $\omega\in S$ such that $|\beta_l|_\omega>1$. Choose $\varepsilon >0$ such that $\varepsilon < \displaystyle\frac{\log |\beta_{l}|_\omega}{\log H(\beta_1, \ldots, \beta_{d}, 1)}$. For all $n \in \mathfrak{S}_m$, we have $|\beta_{l}|_\omega^{n} H(\beta_{1}^{n}, \ldots, \beta_d^{n}, 1)^{-\varepsilon}>1$ because  by the choice of $\varepsilon$ and the height inequality $$H(x_1y_1:\ldots, x_dy_d:1) \le H(x_1:\ldots,x_d:1)H(y_1:\ldots,y_d:1).
	$$ Since $\mathfrak{S}_m$ is an infinite set, Theorem \ref{TH:KUL-SUBSPACE} (applied to the linear form $\mathcal{L}_m$ consisting of the sub-tuple $(\dots,\beta_l,\dots)$ for which $\kappa_{l,m} \neq 0$) asserts that  
	\begin{equation*}
		\sum_{i=1}^d b_i\beta_i^{n}= 0
	\end{equation*}
	holds true for infinitely many natural numbers  $n \in \mathfrak{S}_m$, where $b_i \in K$ and not all zero. This is a contradiction to Proposition \ref{PROP:KUL-FINITE-VANISHING}.  
	Therefore   $\beta_l$ is an algebraic integer. $\hfill\Box$
	\vspace{.3cm}
	
	\noindent{\bf Proof of Theorem \ref{polynomial}.} Given polynomial $P(X_1, \ldots, X_k)$, we write a typical monomial in $P$ as $\displaystyle \prod_{j=1}^k X_j^{b_j} = X_{\mathbf{i}}$ where $\mathbf{i} = (b_1, \ldots, b_k)$. Also, we write the coefficient of the monomial $X_{\mathbf{i}}$ in $P$ as $a_{\mathbf{i}}$.  Let  $\mathcal{I} = \left\{(b_1,\ldots, b_k) \ : \mathbf{i} = (b_1, \ldots, b_k) \mbox{ and } X_{\mathbf{i}} \mbox{ appears in } P  \right\}$ be the index set.     
	
	\bigskip
	 Now, consider the linear form
	  $ \mathcal{L}((Y_{\mathbf{i}})_{\mathbf{i} \in \mathcal{I}}) = \displaystyle\sum_{\mathbf{i} \in \mathcal{I}}a_{\mathbf{i}}Y_{\mathbf{i}}.$  Therefore, by setting $\alpha_{\mathbf{i}} :=\displaystyle \prod_{j=1}^k \alpha_j^{b_j}$ for each $\mathbf{i} = (b_1, \ldots, b_k) \in \mathcal{I}$,  we see that $P(\alpha_1, \ldots, \alpha_k) = \mathcal{L}((\alpha_{\mathbf{i}})_{\mathbf{i} \in \mathcal{I}})$. 
 
 \bigskip
	
By hypothesis, $\alpha_1, \ldots, \alpha_k$ are multiplicatively independent. Therefore the sequence $(\alpha_1^n, \ldots, \alpha_k^n)$ is Zariski-dense on the $k$-dimensional space $\mathbb{A}^k$; same holds for every infinite subsequence. In particular, $P(\alpha_1^n, \ldots, \alpha_k^n)$ cannot vanish infinitely often. Thus, it follows that $P(\alpha_1^n, \ldots, \alpha_k^n) \in \bar{\mathbb{Z}}\backslash\{0\}$ for infinitely many natural number $n$ (by hypothesis).

\bigskip
 	
  By hypothesis, since $P(X_1, 0, \ldots, 0)$ is not a constant polynomial, we let $P(X_1,0\dots,0) = \displaystyle\sum_{i=0}^d c_iX_1^i$, with $c_d \neq 0$ and therefore $(d,0,\dots,0) \in \mathcal{I}$. Let $I_1$ be an equivalence class induced by \eqref{eq2.1} corresponding to a subset, say, $\mathcal{I}_1$  of $\mathcal{I}$ such that $(d,0,\dots,0)\in \mathcal{I}_1$ (or equivalently, $\alpha_1^d\in I_1$). Since $\alpha_1, \ldots, \alpha_k$ are multiplicatively independent, we get   $\mathcal{I}_1 \subseteq \{ (c,0,\dots,0) ~|~ 0 \le c \le d  \}$. Then we have the following cases to consider:  
 \begin{enumerate}
		\item  If $(c,0, \dots,0) \in \mathcal{I}_1$ for some non-negative integer $c < d$,  then $\alpha_1^{d} \sim \alpha_1^{c}$, and therefore $\alpha_1^{d-c}$ is a root of unity. This implies that $\alpha_1$ is a root of unity. 
		\item  If $\mathcal{I}_1 = \{(d,0,\dots,0)\}$, then by Theorem \ref{TH:ALMOST-VANISHING/ALG-INT}, $\alpha_1^d$ is an algebraic integer. Therefore, $\alpha_1$ is an algebraic integer. 
	\end{enumerate}
	In both cases, we are done. 
	$\hfill\Box$
	\smallskip
	
	\noindent{\bf Proof of Theorem \ref{groupring}.}
	Let $f = \displaystyle\sum_{\sigma \in G_\alpha} \lambda_\sigma \sigma$ be  a non-zero element of $\mathbb{\overline{Q}}[G_\alpha]$. Then we have 
	$$
	f(\alpha^n) = \sum_{\sigma \in G_\alpha} \lambda_\sigma \sigma(\alpha^n) = \sum_{\sigma \in G_\alpha} \lambda_\sigma \sigma(\alpha)^n.
	$$
Therefore, by letting $\overline{X} = (X_{\sigma})_{\sigma \in G_{\alpha}}$, the linear form  $\mathcal{L}(\overline{X}) = \sum_{\sigma \in G_{\alpha}}\lambda_\sigma X_\sigma$ and $\overline{\alpha}:= (\sigma(\alpha))_{\sigma \in G_{\alpha}}$, we note that $\mathcal{L}(\overline{\alpha}^n) = f(\alpha^n)$. Hence by hypothesis, we have $\mathcal{L}(\overline{\alpha}^n)\in \overline{\mathbb{Z}} \backslash\{0\}$ for all $n \in \mathfrak{S}$. \

\smallskip
	
	The equivalence relation in \eqref{eq2.1}, induces a partition on the index set $G_\alpha$, that is, $G_\alpha = \displaystyle\cup_{j=1}^d I_j$. If possible, we suppose $\alpha$ (and therefore  all the Galois conjugates of $\alpha$) is not an algebraic integer. By Theorem \ref{TH:ALMOST-VANISHING/ALG-INT},  for every equivalence class $I_j$ we have,
	$$
	\sum_{\sigma \in I_j} \lambda_\sigma \sigma(\alpha)^n = 0 \text{ for all  but finitely many } n \in \mathfrak{S}.
	$$
	Therefore, for all but finitely many values of $n\in \mathfrak{S}$, we have 
	$$
	f(\alpha^n) = \sum_{j=1}^d \sum_{\sigma \in I_j} \lambda_\sigma \sigma(\alpha)^n = 0,
	$$
	a contradiction as  $f(\alpha^n) \in \overline{\mathbb{Z}} \backslash \{0\}$ for all $n \in \mathfrak{S}$ and $\mathfrak{S}$ is an infinite set.
	 $\hfill\Box$ 
 
	\smallskip
	\vspace{.2cm}
	
 For the proof of Theorems \ref{TH:PHILI-RATH-POLY-ANALOG} and \ref{trace}, the index set, $\mathcal{I}\times G$, will be a finite subset of $\mathbb{N} \times \textrm{Gal}(K/\Q)$ for an appropriate Galois extension $K/\Q$. Then we set the linear form as 
	 \[\mathcal{L}(\overline{X}):= \mathcal{L}((X_{i,\sigma})_{(i,\sigma)\in \mathcal{I}\times G}) = \sum_{(i,\sigma) \in \mathcal{I}\times G}\sigma(\lambda_i) X_{i,\sigma}\] 
	\noindent{\bf Proof of Theorem \ref{TH:PHILI-RATH-POLY-ANALOG}}. 
	First, we can assume that $\alpha$ is not a root of unity (for otherwise, we are done). Since 
	$\Tr_{L/\Q} = \Tr_{F/\Q} \Tr_{L/F}$ for an intermediate field $F$ of $L$, we further  reduce our computations to the field $F=\Q(\alpha)$.  That is, 
	 we can assume that $\lambda_i \in F$ for each $i$.  
	\smallskip
	
	 Let $K$ be the Galois closure of $F$ and its Galois group $G = \Gal{K/\Q}$. We set the index set $\mathcal{I} \times G= \{ (k,\sigma)~|~ \lambda_k \neq 0, 1\le k \le D, \sigma \in G \}$. For every algebraic number $\gamma$, we define the tuple $\overline{\gamma}:=(\sigma(\gamma)^k)_{(k,\sigma) \in \mathcal{I}\times G}$. Therefore, by hypothesis, we have $\mathcal{L}(\overline{\alpha}^n) = \Tr_{F/\Q}(P(\alpha^n))= \displaystyle\sum_{i=0}^D \Tr_{F/\Q}(\lambda_i \alpha^{in}) \in \Z$ for each $n$ in an infinite subset $\mathfrak{S} \subseteq \N$.

	If $\alpha$ is not an algebraic integer (and so are the conjugates of $\alpha$ and their powers), then by expanding the trace operator and by Theorem \ref{TH:ALMOST-VANISHING/ALG-INT},  we get for every equivalence class and the corresponding index set $I_j \times H\subset \mathcal{I}\times G$ for some subset $H\subset G$ such that  
\begin{equation}\label{eq1000}
	\sum_{i \in I_j} b_i \alpha_i^n = 0, 
	\end{equation}
	for all but finitely many $n$ where $b_i$ denotes the conjugates of $\lambda_l$ for some finite collection and $\alpha_i$ denotes some conjugate of $\alpha^l$ for some $l \le D$ appropriately. 
	
	Note that for any $(i, \sigma), (k, \tau) \in I_j \times H$, we see that $\sigma(\alpha^i) \sim \tau(\alpha^k)$ which implies that $\alpha^i \sim \delta(\alpha^k)$ for some $\delta \in G$.  Now, we claim that {\it if $\alpha^l \sim \sigma(\alpha)^k$  for some   $1 \le l,k \le D$ and for some $\sigma\in G$,  then   $l = k$}.  
In order to prove this claim, we use the properties of logarithmic Weil height \textbf{$h(x)=\log|H(x)|$}. As $l$ and $k$ are positive integers, since 
	$$l h(\alpha) = h(\alpha^l) = h(\zeta \sigma(\alpha)^k) = h(\alpha^k) = kh(\alpha),
	$$
	(where $\zeta$ is a root of unity), 
	we get  $l=k$ as $h(\alpha) > 0$, which proves the claim.
	
	  Suppose $\lambda_l\alpha^l$ is one of the terms in the above summand. Then by the claim,  \eqref{eq1000} becomes
	$$
	\sum_{\sigma \in H} \sigma(\lambda_l\alpha^{ln}) = 0
	$$
	for all but finitely many $n$. Applying the trace operator on both side,   we get $|H| \Tr_{F/\Q}(\lambda_l \alpha^{ln}) = 0$ for infinitely many $n$. Since $ |H|\geq 1$, we get the assertion. $\hfill\Box$
	\vspace{.3cm}

	\noindent{\bf Proof of Theorem \ref{trace}}.  Let $K$ be the Galois closure of $L$ and its Galois group $G =  \Gal{K/\mathbb{Q}}$.  We write the index set $\mathcal{I}\times G = \{ (i,\sigma)~|~ 1 \le i \le k, \sigma \in G \}$. For the tuple $\overline{\alpha^n}:=(\sigma(\alpha_i)^n)_{\sigma \in G, 1 \le i \le k}$, we note that $\mathcal{L}(\overline{\alpha^n}) = \mathrm{Tr}_{L/\Q}(\lambda_1\alpha_1^n+\cdots+\lambda_k\alpha^n_k) \in \Z$ for $n$ in an infinite subset $\mathfrak{S}$ of $\mathbb{N}$. 
	
 Since $\alpha_1$ is not an algebraic integer, by hypothesis,   by applying Theorem \ref{TH:ALMOST-VANISHING/ALG-INT} to the linear form $\mathcal{L}(\overline{\alpha^n})$,  there exists an equivalence class and the corresponding index set $I_1$ containing $(1, \sigma_1) $ ($\sigma_1$ denotes the identity map) such that 
	\begin{equation}\label{EQN:I1-VANISHING}
		\sum_{(i,\sigma_j) \in I_1} \sigma_j(\lambda_i) \sigma_j(\alpha_i)^n = 0 = \alpha_1^n \left(\sum_{(i,\sigma_j)\in I_1} \sigma_j(\lambda_i) {\zeta_h}^{(w_{ij}-w_{11})n}\right)    
	\end{equation}
	for all but finitely many $n \in \mathfrak{S}$.  
	
	\smallskip
	
 We first prove that $I_1 = \mathcal{P} \times H$ for some subgroup $H \subseteq G$ and $\mathcal{P}  = \{i \ : \ (i, \sigma) \in I_1\} \subseteq \{1,\dots,k\}$. 
 
  Note that if $i\in\mathcal{P}$, then there exists $\sigma \in G$ such that $(i, \sigma)\in I_1$.  Therefore,  for each $i \in \mathcal{P}$, we choose $\tau_i \in G$ such that $\alpha_1 \sim \tau_i(\alpha_i)$ and hence let  $H_i   := \{ \sigma \in G~|~ \tau_i(\alpha_i) \sim \sigma(\tau_i(\alpha_i)) \}$. Then $H_i$ is a subgroup of $G$ by 
  Proposition \ref{PROP:SUBGROUP-VANISHING}.  
  
Note that $H_i = H_j$ for any $i, j \in \mathcal{P}$. For, if $\sigma\in H_i$, then $\alpha_1\sim \tau_i(\alpha_i) \sim \sigma(\tau_i(\alpha_i))$. Since $(i, \tau_i), (i, \tau_j)\in I_1$, we see that $\tau_i(\alpha_i) \sim \tau_j(\alpha_j)$. Therefore, by acting $\sigma$ on this equivalence, we get,  
$$
\tau_j(\alpha_j) \sim \tau_i(\alpha_i) \sim \sigma(\tau_i(\alpha_i)) \sim \sigma(\tau_j(\alpha_j))
$$
and hence $H_i\subseteq H_j$. Similarly, we get $H_j\subseteq H_i$.  Since $(1, \sigma_1)\in H_1$, by taking $H = H_1$, we get   $I_1 = \mathcal{P}\times H$. 

\bigskip
 
 We now   rewrite \eqref{EQN:I1-VANISHING} as 
	\[ \sum_{i \in \mathcal{P}} \sum_{ \sigma \in H} \sigma(\tau_i(\lambda_i))\sigma(\tau_i(\alpha_i^n)) = \sum_{\sigma \in H} \sigma\left(\sum_{i \in \mathcal{P}} \tau_i\left(\lambda_i\alpha_i^n\right)\right) = 0 \]
	for all but finitely many values of $n \in \mathfrak{S}$. In particular, there exists a non-negative integer $a < h$ such that there are infinitely many $n\equiv a \mod h$ with   $n\in \mathfrak{S}$.  For any such $n \equiv a \mod h$ in $\mathfrak{S}$, by combining \eqref{EQN:I1-VANISHING} and the above equality,  we get 
	\[    \sum_{\sigma \in H} \sum_{i \in \mathcal{P}} \sigma\left[\tau_i(\lambda_i)(\tau_i(\alpha_i)^a\right] = 0. \]
	Since $\Tr_{K/\mathbb{Q}}$ is invariant under the Galois action, by applying the trace operator $\Tr_{K/\Q}$, we get, 
	\[ \sum_{i\in \mathcal{P}} \Tr_{K/\Q} (\lambda_i \alpha_i^a) = 0. \]
	Since $\lambda_i, \alpha_i\in L$,  this proves the theorem. $\hfill\Box$ 
	
	\bigskip
	\bigskip
	
	\noindent{\bf Proof of Theorem \ref{rationalfunction}.}
	Let $K$ be the number field that is obtained by adjoining all the zeros and poles of $f_i(X)$'s and $\lambda_j$'s with $\mathbb{Q}$.  For each $i$, we write $f_i(X) = \displaystyle\frac{p_i(X)}{q_i(X)}$ for some coprime polynomials  $p_i(X),q_i(X) \in \mathcal{O}_K[X]$.    Let $h$ be the order of the torsion subgroup of $K^\times$.

	Since $(p_i(X), q_i(X))=1$ in $K[X]$ for all  $i$, there exist polynomials $r_i(X), s_i(X) \in \mathcal{O}_K[X]$ and $\beta_i \neq 0 \in \mathcal{O}_K$ such that 
	$$
	r_i(X) p_i(X) + s_i(X)q_i(X) = \beta_i,
	$$
	and hence $r_i(X) f_i(X) + s_i(X) = \displaystyle\frac{\beta_i}{q_i(X)}$. In order to prove $f_i(X)\in \mathcal{O}_K[X]$, first, we prove that   $q_i(X)$ is a constant polynomial in $K[X]$ for each $i$. To do this, we need to understand the conjugate polynomials of $q_i(X)$. We shall define the following.

For any number field $L$, we let  
	$$\mathfrak{V}_L := \mathcal{O}_L \backslash \{ \textrm{Zeroes of } p_i(X),q_i(X), (q_j(X)p_i(X))^h - (q_i(X)p_j(X))^h \text{ for } 1 \le i<j \le k \}.
	$$
	Note that this set is the complement of a finite set because $f_i(X)/f_j(X)$ is not constant, and also we are removing the solutions of the equation $f_i(X)/f_j(X) = \zeta_h^a$ for every $i\ne j$ and for some integer $a$.

By the definition of $\mathfrak{V}_K,$  we see that for any $\gamma \in \mathfrak{V}_K$ (or $\mathfrak{V}_L$), the tuple $(f_1(\gamma), \ldots, f_k(\gamma))$ is a non-degenerate tuple.   Therefore, by Theorem \ref{TH:ALMOST-VANISHING/ALG-INT}, $f_i(\gamma) \in \mathcal{O}_K$ (or $\mathcal{O}_L$) for all $i$.  Hence,  the value $\beta_i/q_i(\gamma) \in \mathcal{O}_K$ for each $\gamma \in \mathfrak{V}_K$. Note also that $\mathfrak{V}_K$ contains all but finitely many integers in it.

	If possible, for some $i$, we suppose $q_i(X)$ is a non-constant polynomial. Let $L$ be the Galois closure of $\Q(\beta_i,\text{coefficients of }q_i(X))$. Then the polynomial $Q_i(X) = \displaystyle\prod_{\sigma \in \textrm{Gal}(L(X)/\Q(X)) }\sigma(q_i(X))$ is also a non-constant polynomial in $\mathbb{Q}[X]$. Consider $\mathfrak{V}_L$ and conclude that $\beta_i/q_i(\gamma) \in \mathcal{O}_L$ for each $\gamma\in \mathfrak{V}_L$. Therefore,   $N_{L/\Q}(\beta_i/q_i(\gamma)) \in \mathbb{Z}$ for every $\gamma \in \mathfrak{V}_L$.   Note that for all but finitely many $n \in \mathbb{Z}$ lie in $\mathfrak{V}_L$ and hence we have $N_{L/\Q}(\beta_i/q_i(n)) = N_{L/\Q}(\beta_i)/Q_i(n) \in \Z$. Since $|Q_i(n)| \to \infty$ as $|n| \to \infty$, we obtain $|N_{L/\Q}(\beta_i)/Q_i(n)| \to 0$ as $n \to \infty$.  However,  this is a sequence of integers and therefore we conclude that $N_{L/\Q}(\beta_i) = 0$.  This implies $\beta_i = 0$, which is a contradiction. Hence $q_i(X)$ must be constant and hence $f_i(X) \in K[X]$ for each $i$.

	We now proceed to show that $f_i(X) \in \mathcal{O}_K[X]$. Let  $f_i(X) =\displaystyle \frac{p_i(X)}{\gamma_i}= \frac{1}{\gamma_i} \sum_{j=0}^d b_jX^j$ for some $\gamma_i, b_j \in \mathcal{O}_K$. Suppose there exists a prime ideal $\mathfrak{P}$ in $\mathcal{O}_K$ such that $v_\mathfrak{P}(b_j/\gamma_i)<0$ for some $j$.  We choose a number field $L$ containing $K$ having a prime ideal $\mathfrak{Q}$ in $\mathcal{O}_L$ lying above $\mathfrak{P}$ such that the ramification index (say $e$) is greater than $2d$.  Note that such $L$ can be chosen by adjoining the appropriate root of unity to $K$ to get the desired.  Therefore, for any $\delta \in K^\times$, we have $v_\mathfrak{Q}(\delta) = e v_\mathfrak{P}(\delta)$.  By the choice of $e$, we conclude that 
	\begin{equation}\label{eq4.1}
		e \min_{0\leq r\leq d}{v_\mathfrak{P}(b_r/\gamma_i)} + d < 0 .
	\end{equation}
Now since $\mathfrak{V}_L$ is the complement of a finite set, we choose $\alpha \in \mathfrak{V}_L$ such that $v_{\mathfrak{Q}}(\alpha)=1$. Then 
$
\displaystyle v_\mathfrak{Q}(f_i(\alpha)) = \min_{r} v_{\mathfrak{Q}}\left(\frac{b_r}{\gamma_i} +r\right) 
$  because for any two distinct non-negative numbers $r, s \le d$, due to \eqref{eq4.1}, we have, 
 $
	\displaystyle v_\mathfrak{Q}\left(\frac{b_s\alpha^s}{\gamma_i}\right) \neq v_\mathfrak{Q}\left(\frac{b_r\alpha^r}{\gamma_i}\right), $ as $e > 2d$ (If they are equal, we take absolute values and consider their difference to get a contradiction). 
	Therefore,
	$$
	v_\mathfrak{Q}(f_i(\alpha))   \le \min_r e\left[v_\mathfrak{P}\left(\frac{b_r}{\gamma_i}\right)\right]+d< 0,
	$$
  a contradiction to the fact that  $f_i(\alpha) \in \mathcal{O}_L$ as $\alpha \in \mathfrak{V}_L$. $\hfill\Box$
	\
	\section{Proofs of Theorems \ref{thm1.1} and \ref{finite}}
	
	\noindent{\bf Proof of Theorem \ref{thm1.1}}. 
	By Lemma \ref{lem0}, it is enough to prove the assertion for $\mathbb{Q}_p$ for every prime number $p$. 
	
	Let $p$ be a given prime number. Assume that  $\alpha_1$ is not integral over $\mathbb{Z}_p$ and $K$ be the Galois closure of $\mathbb{Q}_p(\alpha_1)$. All the other Galois conjugates of $\alpha_1$ are $\alpha_2, \ldots, \alpha_k$ for some integer $k$. It is  also enough to prove the case when $b_3=b_4 = \cdots = b_k =0$ and  the proof for the general case follows verbatim. 
	
	Assume that $\alpha_1$ is not integral over $\mathbb{Z}_p$ (and so is  $\alpha_2$).  For simplicity, we write $\alpha_1 = \alpha$ and $\alpha_2 = \beta$.  Then $\alpha^{-1}$ and $\beta^{-1}$ are integral over $\mathbb{Z}_p$ and let  the  characteristic polynomial $f_{K|\mathbb{Q}_p}(x):= f(x)$ of $\alpha^{-1}$   satisfies the assertion in Lemma \ref{lem1}.  Since $\beta$ is a Galois conjugate of $\alpha$, we see that the characteristic polynomial of $\beta^{-1}$ is $f(x)$ itself. Write the unique maximal ideal $p\mathbb{Z}_p$ of $\mathbb{Z}_p$ by $\mathfrak{P}$. 
	\bigskip
	
	If $f(x) = x^d+a_{d-1}x^{d-1}+\cdots+a_0$, then  $f(\alpha^{-1}) = 0$  and  $f(\beta^{-1}) = 0$. Hence we get
	\begin{equation}\label{eq5.2}
		a_0 +a_1\alpha^{-1}+ \cdots + a_{d-1}\alpha^{-d+1}+ \alpha^{-d}= 0 = a_0+ a_1\beta^{-1}+\cdots + a_{d-1}\beta^{-d} 
	\end{equation}
	Then, for any integer $\ell \geq 0$, multiplying by $ \alpha^{d+\ell}$ both sides of \eqref{eq5.2}, we get 
	\begin{equation}\label{eq5.3}
		\alpha^{\ell} = - a_{d-1}\alpha^{\ell+1}-\cdots  - a_0\alpha^{d+\ell} \mbox{ with } a_i\in \mathfrak{P}
	\end{equation}
	and
	\begin{equation}\label{eq5.4}
		\beta^{\ell} = - a_{d-1}\beta^{\ell+1}-\cdots  - a_0\beta^{d+\ell} \mbox{ with } a_i\in \mathfrak{P}
	\end{equation}
	by Lemma \ref{lem1}. Now, for any integer $\ell\geq 0$, let $M_\ell$ be a $\mathbb{Z}_p$-submodule of $K$ spanned by $b_1\alpha^{\ell+1}+b_2\beta^{\ell+1}, \ldots,$ $ b_1\alpha^{\ell+d}+b_2 \beta^{\ell+d}$. By Lemma \ref{lem2}, it is clear that $M_\ell$ is  a non-zero $\mathbb{Z}_p$-submodule of $K$.   Hence, by \eqref{eq5.3} and \eqref{eq5.4},    we have 
	\begin{equation}\label{eq5.5}
		b_1 \alpha^\ell +b_2\beta^\ell \in \mathfrak{P}M_\ell \mbox{ for any integer } \ell\geq 0.
	\end{equation}
	Note that for any nonnegative integers $\ell_1$ and $\ell_2$, we have
	\begin{equation}\label{eq5.6}
		M_{\ell_1} \subset M_{\ell_2} \mbox{ whenever } \ell_1 < \ell_2.
	\end{equation}
	It is enough to prove that for $\ell_1 = \ell$, $\ell_2 = \ell+1$, we have $b_1\alpha^{\ell+1}+ b_2\beta^{\ell+1} \in M_{\ell_2}$  (and then inductively we can get the general assertion). Since $f(\alpha^{-1}) = 0 = f(\beta^{-1})$, multiplying by $\alpha^{\ell+1+d}$ on both sides, similarly, by  $\beta^{\ell+1+d}$, we get $b_1\alpha^{\ell+1}+b_2 \beta^{\ell+1} \in M_{\ell_2}$, as desired.
	
	Now we claim that for any integer $\ell\geq 0$ and any integer $m\geq 0$, we have 
	\begin{equation}\label{eq5.7}
		b_1 \alpha^\ell+b_2 \beta^\ell \in \mathfrak{P}^{m+1}M_{\ell+dm}.
	\end{equation}  
	Let $\ell$ be any nonnegative integer and $m=0$. Then \eqref{eq5.7} is true by \eqref{eq5.5}. Hence, we shall assume that \eqref{eq5.7} holds true for $\ell$ and for some integer $m\geq 1$. That is, we have $b_1 \alpha^\ell +b_2\beta^\ell\in\mathfrak{P}^{m+1}M_{\ell+dm}$ and  we prove \eqref{eq5.7} holds true for $\ell$ and $m+1$. For any integer $i$ with $\ell+dm+1\leq i\leq \ell+d(m+1)$,  we have $b_1\alpha^i + b_2\beta^i \in M_{\ell+dm}$. By \eqref{eq5.5} and \eqref{eq5.6}, we get 
	$$
	b_1 \alpha^i+b_2 \beta^i\in\mathfrak{P}M_i \subset \mathfrak{P}M_{\ell+d(m+1)} \mbox{ for all integers } i \mbox{ with }  \ell+dm+1\leq i\leq \ell+d(m+1)
	$$
	and hence we get
	\begin{equation}\label{eq5.8}
		M_{\ell+dm} \subset \mathfrak{P}M_{\ell+d(m+1)}.
	\end{equation}
	Since, by the induction hypothesis, we have   $ b_1\alpha^\ell+ b_2\beta^\ell \in \mathfrak{P}^{m+1}M_{\ell+dm}$,  by \eqref{eq5.8}, we arrive at 
	$$
	b_1\alpha^\ell+b_2\beta^\ell \in \mathfrak{P}^{m+2}M_{\ell+d(m+1)}
	$$
	as desired. 
	
	\bigskip
	
	Now to finish the proof, we take $\ell = 0$ and $m = \mathit{v}_p((b_1+b_2)d)$.  By hypothesis, we know that ${\mathrm{Tr}}_{K|\mathbb{Q}_p}(M_{a-d}) \subset \mathbb{Z}_p$ for all integers $a\leq [d\log_2((b_1+b_2)d)]+1$.  Since $dm < d\log_2((b_1+b_2)d)+1$, we get ${\mathrm{Tr}}_{K|\mathbb{Q}_p}(M_{dm}) \subset \mathbb{Z}_p$. Therefore, since  $b_1+b_2 = b_1\alpha^0+b_2\beta^0 \in \mathfrak{P}^{m+1}M_{dm}$, we see that  
	$(b_1+b_2)d = {\mathrm{Tr}}_{K|\mathbb{Q}_p}(b_1\alpha^0+b_2\beta^0) \in {\mathrm{Tr}}_{K|\mathbb{Q}_p}(\mathfrak{P}^{m+1}M_{dm}) \subset \mathbb{Z}_p$.  Therefore,  we get, $(b_1+b_2)d  \in \mathfrak{P}^{m+1}$ which implies that  the power of $p$ dividing $(b_1+b_2)d$ is at least $m+1$, a contradiction. Hence the theorem. $\hfill\Box$

	\noindent{\bf Proof of Theorem \ref{finite}}. Given that 
	 $\displaystyle C  = 2+ \left\lceil\frac{1}{2}\max_\mathfrak{P}{v_\mathfrak{P}(\lambda_1\lambda_2(\alpha_1-\alpha_2)^2)}\right\rceil + \max_\mathfrak{P}(|v_\mathfrak{P}{(\lambda_1\lambda_2)}|)
$ where $\mathfrak{P}$ runs through all the prime ideals in $\mathcal{O}_K$ and  
$\lambda_1\alpha_1^n+\lambda_2\alpha_2^n \in \mathcal{O}_K$ for all $1 \le n \le C$. 

If possible, we assume that $\alpha_1$ is not an algebraic integer.   Then  there exists a prime ideal $\mathfrak{P}$ of $\mathcal{O}_K$ such that $v_{\mathfrak{P}}(\alpha_1) < 0$. 

We first claim that $v_{\mathfrak{P}}(\alpha_1)=v_{\mathfrak{P}}(\alpha_2)$. To show this, for each $i> |v_{\mathfrak{P}}(\lambda_1)|$, we have that $v_\mathfrak{P}(\lambda_1 \alpha_1^i) = v_\mathfrak{P}(\lambda_1) + i v_\mathfrak{P}(\alpha_1) \le v_\mathfrak{P}(\lambda_1) -i < 0$. Using the fact that $v_\mathfrak{P}(x+y)=\min\{v_\mathfrak{P}(x), v_\mathfrak{P}(y)\}$, when $v_\mathfrak{P}(x)\neq  v_\mathfrak{P}(y)$, and that  
	$ v_\mathfrak{P}(\lambda_1 \alpha_1^i + \lambda_2 \alpha_2^i) \ge 0 $ as  $1 \le i \le C$, we conclude that   
	$$
	v_\mathfrak{P}(\lambda_2\alpha^i_2)=v_\mathfrak{P}(\lambda_1\alpha^i_1)\implies i|v_\mathfrak{P}(\alpha_1)-v_\mathfrak{P}(\alpha_2)|\leq |v_\mathfrak{P}(\lambda_1\lambda_2)|
	$$
	holds for each $i > |v_\mathfrak{P}(\lambda_1)|$. Now, by choosing $i > |v_\mathfrak{P}(\lambda_1\lambda_2)|$, we conclude that $v_\mathfrak{P}(\alpha_1) = v_\mathfrak{P}(\alpha_2)$ as $v_\mathfrak{P}$ takes integer values. In particular, $v_\mathfrak{P}(\alpha_2)<0$.
	
We note that for each $n\geq 1$, 	 
		\begin{equation}\label{eq5.1}
			V_n:= [\lambda_1\alpha_1^n+\lambda_2\alpha_2^n] [\lambda_1\alpha_1^{n+2}+\lambda_2\alpha_2^{n+2}] -  [\lambda_1\alpha_1^{n+1}+\lambda_2\alpha_2^{n+1}]^2 = \lambda_1\lambda_2 \alpha_1^n\alpha_2^n [\alpha_1 - \alpha_2]^2 = V_1 [\alpha_1\alpha_2]^{n-1}.  
		\end{equation}
		For $ n > \max\left\{(v_\mathfrak{P}(V_1))/2+1,|v_\mathfrak{P}(\lambda_1\lambda_2)|\right\}$, we have 
		\[v_\mathfrak{P}(V_n)= v_\mathfrak{P} (V_1) + (n-1)v_\mathfrak{P} (\alpha_1\alpha_2) \le v_\mathfrak{P}(V_1) - 2(n-1) < 0. \]
In particular, $v_\mathfrak{P}(V_C) < 0$, as $C  > \max\left\{(v_\mathfrak{P}(V_1))/2+1,|v_\mathfrak{P}(\lambda_1\lambda_2)|\right\}$. 	This is not possible, because	
		  one notes that $V_n   \in \mathcal{O}_K$ for all $n\leq C$ and  in particular, $V_C \in \mathcal{O}_K$.  Therefore  $\alpha_1\in \mathcal{O}_K$. Similarly, we can prove $ \alpha_2\in \mathcal{O}_K$. $\hfill\Box$
 
	\smallskip
	
	\begin{Rem}\label{REM:TH-finite-difficulties}
		One may try to generalise this argument for $k \ge 3$. There are issues with the valuation argument for more than 2 variables. We may use Hankel determinants of matrices with entries consisting only of $\lambda_1\alpha_1^i+\cdots+\lambda_k\alpha_k^i$ to arrive at an equation very similar to \eqref{eq5.1}. Proceeding in the same manner from there, one may obtain that $\displaystyle\prod_{i=1}^k \alpha_i \in \mathcal{O}_K$. However, it is not possible to obtain a bound purely depending on $\lambda_1\alpha_1^i+\cdots+\lambda_k\alpha_k^i,\lambda_i$, by induction for the following reason: When we try to do induction over $k$, we know the values $\lambda_1\alpha_1^i+\cdots+\lambda_k\alpha_k^i$ only for $k$ terms and do not know for $k-1$ terms. The process will follow through but we won't be able to determine the constant $C$, if we assume that $|\alpha_i|_\mathfrak{P} \le 1$ for some $i$ by this method.  
	\end{Rem}
	\vspace{.3cm}

	\noindent{\bf Acknowledgments.} We are thankful to the referees for carefully going through the earlier draft and suggesting many useful changes to make the article readable. This work is part of the SERB-MATRICS project and the last author is thankful to the SERB, India.


\begin{thebibliography}{9999}
		\bibitem{bomb}
		E. Bombieri and W. Gubler,  {\it Heights in Diophantine geometry}, New Mathematical Monographs, vol. 4, {\it Cambridge University Press}, Cambridge, 2006.
		\bibitem{corv} 
		P. Corvaja and U. Zannier,   On the rational approximation to the powers of an algebraic number: Solution of two problems of Mahler and Mendes France, {\it Acta Math.}, {\bf 193} (2004), 175--191.
		\bibitem{FATOU}
		{P. Fatou}, Sur les séries entières à coefficients entiers. \textit{C. R. Acad. Sci.}, Paris 138, (1904) 342--344.
		\bibitem{smit}
		B. de Smit, Algebraic numbers with integral power traces, {\it J. Number Theory} {\bf 45} (1) (1993)  112--116.
		\bibitem{Hindry}
		M. Hindry and J. H. Silverman, Diophantine Geometry: An Introduction, Graduate Texts in Mathematics 201,{\it Springer-Verlag}, New York, 2000.
		\bibitem{kul}
		A. Kulkarni, N. Mavraki and K. D. Nguyen,  Algebraic approximations to linear combinations of powers: An extension of results by Mahler and Corvaja-Zannier, {\it Trans. Amer. Math. Soc.}, {\bf 371} (6) (2019) 3787--3804.
		
		\bibitem{lang}
		S. Lang, Fundamentals of Diophantine Geometry, {\it Springer-Verlag}, 1983.
		\bibitem{MACDONALD}
		I. G. Macdonald, Symmetric functions and Hall polynomials. With contributions by A. V. Zelevinsky. \textit{Oxford University Press} (1998)
		\bibitem{rath}
		P. Philippon and P. Rath,  A note on trace of powers of algebraic numbers, {\it J. Number Theory}, {\bf 219} (2021), 198--211.
		\bibitem{PISOT}
		{C. Pisot}, La répartition modulo 1 et les nombres algébriques. \textit{Ann. Sc. Norm. Super. Pisa, II. Ser.} 7, 205--248 (1938)
		\bibitem{polya}
		G. Polya,   \"{U}ber ganzwertige ganze Funktionen, {\it Rend. Circ Mat. Palermo}, {\bf 40} (1915), 1--16.
		
		\bibitem{schmidt}
		W. M. Schmidt, {\it Diophantine  Approximations and Diophantine Equations}, Lecture Notes in Math., {\bf 1467}, {\it Springer}, Berlin, 1991.
		
		\bibitem{zannier}
		U. Zannier, {\it Some Applications of Diophantine Approximation to Diophantine Equations} (with special emphasis on the Schmidt Subspace Theorem), {\it Forum}, Udine, 2003.
		
	\end{thebibliography}
\end{document}